\documentclass[12pt]{iopart}

%Uncomment next line if AMS fonts required
\usepackage{iopams}
\usepackage{amsthm}
\usepackage{graphicx}
\usepackage{psfrag}
\newcommand{\R}{{\mathbb{R}}}

\newtheorem{thm}{Theorem}[section]
\newtheorem{cor}[thm]{Corollary}
\newtheorem{lem}[thm]{Lemma}
\newtheorem{prop}[thm]{Proposition}
\theoremstyle{definition}

\newtheorem{defn}[thm]{Definition}
\theoremstyle{remark}

\begin{document}

\title[Uniqueness of models in persistence
homology]{Uniqueness of models in persistent
homology: the case of curves}

\author{P Frosini$^{1,3}$, C Landi$^{2,3}$}

\address{$^1$ Dipartimento di Matematica, Universit\`a di Bologna, Italy}
\address{$^2$ Dipartimento di Scienze e Metodi dell'Ingegneria, Universit\`a di Modena e Reggio Emilia, Italy}
\address{$^3$ ARCES, Universit\`a di Bologna, Italy}
\ead{frosini@dm.unibo.it, clandi@unimore.it}

\begin{abstract}
We consider generic  curves in $\R^2$, i.e. generic $C^1$ functions $f:S^1\to\R^2$. We analyze these curves through the persistent homology groups of a filtration induced on $S^1$ by  $f$. In particular, we consider the question whether these  persistent homology groups  uniquely characterize $f$, at least up to re-parameterizations of $S^1$. We give a partially positive  answer to this question. More precisely, we prove that $f=g\circ h$, where $h:S^1\rightarrow S^1$ is a $C^1$-diffeomorphism, if and only if the persistent homology groups of $s\circ f$ and $s\circ g$ coincide, for every  $s$ belonging to the group $\Sigma_2$ generated by reflections in the coordinate axes.  Moreover, for a smaller set of generic functions, we show that $f$ and $g$ are close to each other in the max-norm (up to re-parameterizations) if and only if, for every  $s\in \Sigma_2$, the persistent Betti numbers functions of $s\circ f$ and $s\circ g$ are close to each other, with respect to a suitable distance.
\end{abstract}

%Uncomment for PACS numbers title message
\ams{55N35,  53A04, 68U05}
% Keywords required only for MST, PB, PMB, PM, JOA, JOB?
\vspace{2pc}
{\it Keywords\/}: Persistent Betti numbers, reflections, matching distance
% Uncomment for Submitted to journal title message

\submitto{\IP}
% Comment out if separate title page not required
%\maketitle

\section{Introduction}
Persistent homology is a widely studied tool in Topological Data Analysis. It is
based on investigating topological spaces by growing a space (i.e., the data to be
studied) incrementally, and by analyzing the topological changes that occur during
this growth. The occurrence and placement of topological events (e.g., creation,
merging, cancellation of the connected components of the lower level sets) within
the history of this growth describe the essential geometrical properties of the
data. Persistent homology aims to define a scale of the relevance of these
topological events, where the longer the lifetime of a feature produced by a topological event, the more significant the event.

An area of application of the persistent homology theory in TDA is shape
description \cite{VeUr*93,CoZo*04}. In this setting the studied topological space $X$ represents the object
whose shape is under study, and its shape is analyzed my means of a vector-valued
function $f$ defined on it. This function corresponds to measurements on the data
depending on the shape properties of interest (e.g.,
elongation, bumpiness, curvature, and so on). This function is then used to filter
the space by lower level sets. The persistent homology of this filtration gives
insights on the shape of $X$ as seen through $f$. In particular, persistence diagrams,
i.e. multisets of points of the plane encoding the rank of persistence homology
groups, constitute a shape descriptor, or a signature, of $(X,f)$
(cf. \cite{CoEdHa07}).

We recall that while an object representation (either pixel- or
vector-based) contains enough information to reconstruct (an approximation to) the
object, a description only contains enough information to identify an object as a
member of some class, usually by means of a dissimilarity measure. The
representation of an object is thus more detailed and accurate than a description,
whereas the description is more concise and conveys an elaborate and composite view
of the object class.

In the illustrated framework, two objects $X$ and $Y$ belong to
the same class if they behave in a similar way with respect to the chosen shape
property represented by the continuous functions $f:X\to \R^k$ and $g:Y\to \R^k$. More
formally, $(X,f)$ and $(Y,g)$ belong to the same object class if and only if there is a
homeomorphism $h:X\to Y$ such that $f=g\circ h$. This condition immediately implies that
$(X,f)$ and $(Y,g)$ have the same persistent homology groups, while it is easy to give examples showing that in general this implication cannot be reversed.

Until now, research has been mainly focused on direct problems, such as, given $X$
and $f$, computing persistence diagrams, establishing stability properties of
persistence diagrams, choosing functions in order to impose desired invariance
properties.

As far as inverse problems  are concerned in this setting, there have
been some attempts to study the problem of existence of models in persistence
homology. For example, confining our attention to the 0th homology degree and $k=1$,
it is known under which conditions a multiset of points of the plane is the
persistent diagram of some space $X$ endowed with some function $f:X\to \R$.
Furthermore, it is possible to explicitly construct a space $X$ and a function $f$
having a prescribed persistence diagram, i.e. a model for a given persistence
diagram. For more details about this line of research we refer the reader to \cite{dAFrLa10}. Moreover, a realization result for finite persistence models is stated in \cite{CaZo09}.

In this paper we will tackle the inverse problem related to the uniqueness of the
model. What does uniqueness mean in this setting?
It means that there is exactly
one model with given persistent homology groups up to the equivalence relation for which
``$(X,f)$ and $(Y,g)$ are equivalent if and only if there is a homeomorphism $h:X\to Y$ such
that $f=g\circ h$''.

We underline that different formulations of uniqueness would give rise to  either impossible or trivial problems. Indeed, in general, it is false that if $(X,f)$ and $(X,g)$ have the same persistent homology groups then necessarily $f=g$. On the other hand, it is easy to see that, for any  space $X\subseteq \R^2$,  taking the function $f:X\to \R^4$ defined by $f(x,y)=(x,-x,y,-y)$, the persistent homology groups of $(X,f)$ uniquely determine $X$. However, this would not be a satisfactory solution of the uniqueness problem, in first place because  it would work  with only one prescribed function $f$; in second place because in pattern recognition the focus is generally on parametrization-independent shape comparison methods (cf, e.g., \cite{MiMu06}).

Our uniqueness problem is clearly strictly related to the decision
problem in shape matching, that is, given two patterns, deciding whether there
exists a transformation taking one pattern to the other pattern. Rephrased
differently, we wish to study to which an extent persistent homology can give rise
to complete shape invariants.

We also observe that the problem of deciding whether two functions are obtained one from the other by a re-parameterization is also strictly related to the concept of natural pseudo-distance between the pairs $(X,f)$ and $(Y,g)$ (we refer the interested reader to \cite{FrMu99,DoFr04,DoFr07,DoFr09}).

As the reader can guess, this subject is not simple. We  know well that homology is not sufficient to reconstruct a manifold up to diffeomorphisms, and clearly also persistent homology has analogous limitations. Indeed, several examples in this paper prove that some kind of indeterminacy and non-uniqueness is unavoidable, also in the case of curves, i.e. when $X=S^1$. However, in this paper we can show that the situation is not so negative as it could appear at a first glance. In particular, we shall prove that, at least in the  case of generic curves, in the differentiable category,
persistent homology provides sufficient information to identify the studied function up to diffeomorphisms of $S^1$ (Theorem \ref{th1}).   Moreover, we show that, under mild assumptions,  the proximity between persistent Betti numbers functions of two curves implies proximity between the curves themselves (Theorem \ref{th2}).

\section{Notations and basic definitions}

In this paper we confine ourselves to study the uniqueness of models  when $X$ is a one-dimensional  manifold without boundary,  in the $C^1$-differentiable case. Since any such curve $X$ is diffeomorphic to the standard circle $S^1$, choosing a fixed diffeomorphism from $X$ to $S^1$, we can confine our study  to the case $X=S^1$.  Therefore, our problem can be restated as follows: is it true that, given two functions $f$ and $g$ on $S^1$, the associated  persistent homology groups coincide if and only if $g$ is a re-parameterization of $f$?

In order to deal with our uniqueness problem we will use  only the rank of $0$th persistent
homology groups but in a bi-dimensional setting, that is to say  bi-dimensional size functions \cite{BiCeFrGiLa08}. We recall here their basic definitions, as a particular case of the more general theory of multidimensional persistence (cf. \cite{CaZo09}).

For any point $u=(u_1,u_2)\in \R^2$, we denote by $D^u$ the set $\{w=(w_1,w_2)\in \R^2: w_1\le u_1 \wedge w_2\le u_2\}$. For any continuous function
$f=(f_1,f_2):S^1\to\R^2$ and $u\in \R^2$, we can consider the bi-filtration of $S^1$ given by  $\{f^{-1}(D^u)\}_{u\in \R^2}$. The symbol $\Delta^+$ will denote the open
set $\{( u,v)\in\R^2\times\R^2:  u_1<v_1\wedge  u_2<v_2\}$.

\begin{defn}
For any $(u , v)\in \Delta^+$, the {\em $0$th bi-dimensional persistent homology group} of $f$ at $( u , v)$ is the group $$H_0^{u,v}(f)=\mathrm{im\,}H_0(f^{-1}(D^u)\hookrightarrow f^{-1}(D^v)),$$
where $f^{-1}(D^u)\hookrightarrow f^{-1}(D^v)$ is the  inclusion map.
\end{defn}
Here the considered homology theory is the \v{C}ech one, with real coefficients. In plain words, the rank of $H_0^{u,v}(f)$ is equal to the number of connected components of $f^{-1}(D^v)$ that contain at least one point in $f^{-1}(D^u)$.  
We remark that,  since $S^1$ is a compact manifold, for any continuous function $f: S^1\rightarrow \R^2$, its persistent homology groups are  finitely generated \cite{CaLa11}. Hence, their rank, also known as a {\em persistent Betti number}, is finite.

 In order to make our treatment more readable, we shall use the same symbol $\theta$ to denote both each point of $S^1$, and the local parameterization of $S^1$ that we shall use in derivatives. This requires a little abuse of notation since, rigorously speaking, we should denote the points in $S^1$ by equivalence classes of angles $\theta\in \R$ (equivalent up to multiples of $2\pi$). We also assume that $\theta$ is counterclockwise increasing.
 
\section{Generic assumptions on functions}

We begin by presenting some  negative examples. In these examples we add more and more assumptions showing that without those assumptions the model uniqueness fails. We will end with two conditions (C1), (C2) on the functions $f,g$ defined on $S^1$ that, as we will show in the next section, are sufficient to guarantee uniqueness. We end this section showing that the set of functions satisfying conditions (C1), (C2) is dense in $C^1(S^1,\R^2)$. In other words, we will prove the model uniqueness for a generic set of functions defined on simple curves. Let us  remark that, although we are assuming that $X$ is a simple curve (indeed diffeomorphic to $S^1$), the considered functions $f$ defined on $X$ can give rise to multiple points.

The first example, illustrated in Figure \ref{tri}, shows two simple closed curves $X$ (left) and $Y$ (right) endowed with continuous functions $f:X\to \R$ and $g: Y\to \R$, such that the persistent homology groups of $f$ and $g$ coincide at every $(u,v)\in \Delta ^+$ (center). However there does not exist any $C^1$-diffeomorphism $h: X\to Y$ such $f=g\circ h$. Indeed, a $C^1$-diffeomorphism $h: X\to Y$ such that $f=g\circ h$ should take critical points of $f$ into critical points of $g$ preserving their values and adjacencies. This is clearly impossible.

\begin{figure}[h]
\begin{center}
\psfrag{X}{$X$}\psfrag{Y}{$Y$}
\psfrag{f1}{$f$}\psfrag{f2}{$g$}
\psfrag{u}{$u$}\psfrag{v}{$v$}
\psfrag{a}{$a$}\psfrag{b}{$b$}\psfrag{c}{$c$}\psfrag{d}{$d$}\psfrag{e}{$e$}
\psfrag{0}{$0$}\psfrag{1}{$1$}\psfrag{2}{$2$}\psfrag{3}{$3$}
\includegraphics[width=14cm]{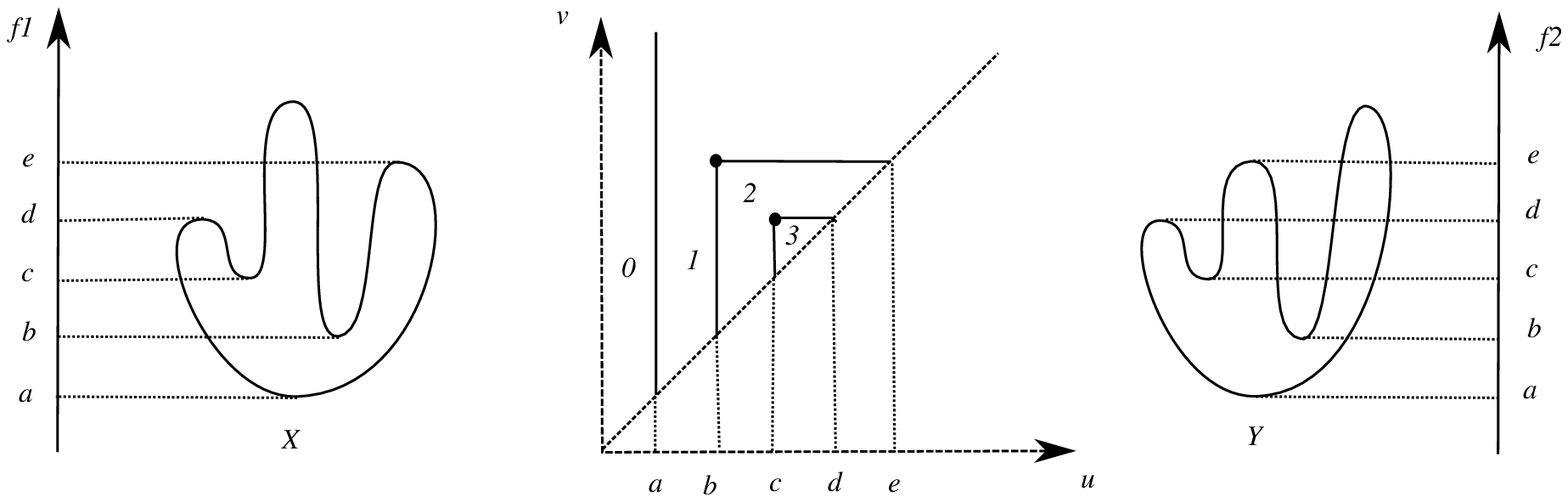}
\caption{The curves $X$ (left) and  $Y$ (right), endowed with the continuous functions $f:X\to \R$ and $g: Y\to \R$ respectively,  cannot be distinguished by the persistent homology groups of $f$ and $g$, as  their ranks coincide everywhere in $\Delta^+$ (center). }\label{tri}
\end{center}
\end{figure}

It is interesting to note that also changing the functions $f$ and $g$ into their opposite, the closed curves $X$, $Y$ cannot be distinguished. Indeed, also the persistent homology groups of $-f$ and $-g$ coincide at every $(u,v)\in \Delta ^+$. Obviously, there does not exist any $C^1$-diffeomorphism $h: X\to Y$ such that $-f=-g\circ h$ (see Figure \ref{tri2}).

\begin{figure}[h]
\begin{center}
\psfrag{X}{$X$}\psfrag{Y}{$Y$}
\psfrag{f1}{$-f$}\psfrag{f2}{$-g$}
\psfrag{u}{$u$}\psfrag{v}{$v$}
\psfrag{m}{$m$}\psfrag{-b}{$-b$}\psfrag{-c}{$-c$}\psfrag{-d}{$-d$}\psfrag{-e}{$-e$}
\psfrag{0}{$0$}\psfrag{1}{$1$}\psfrag{2}{$2$}\psfrag{3}{$3$}
\includegraphics[width=14cm]{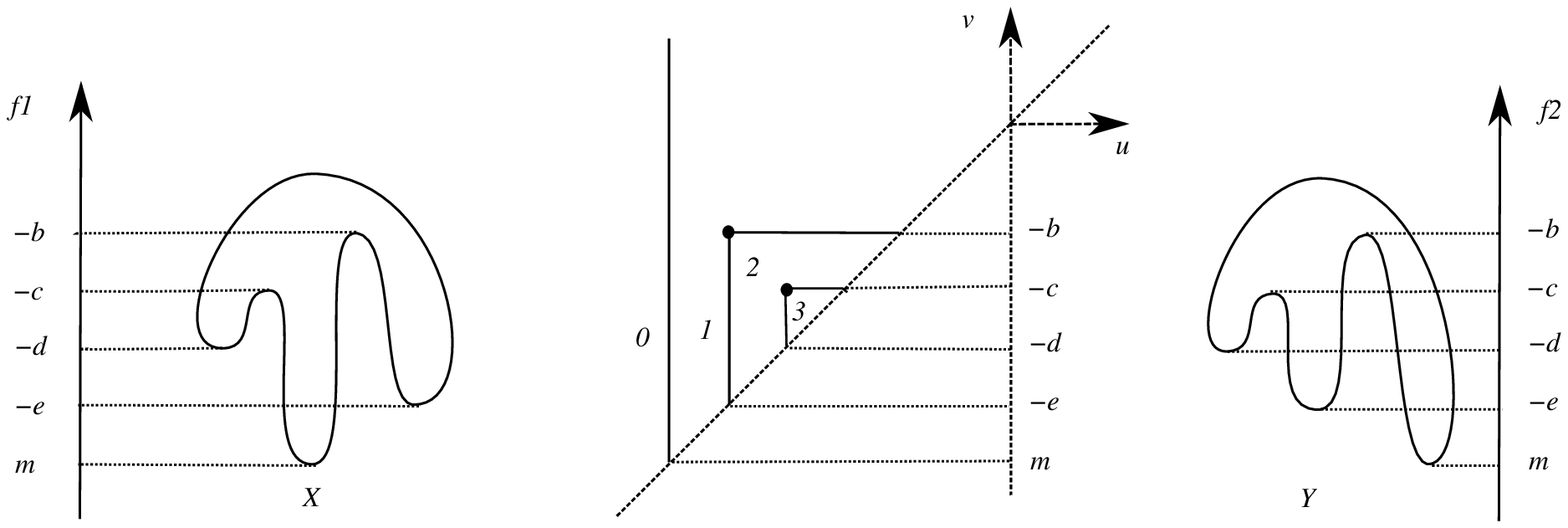}
\caption{The curves $X$, $Y$ endowed with the continuous functions $-f:X\to \R$ and $-g: Y\to \R$ cannot be distinguished by the persistent homology groups of $-f$ and $-g$.}\label{tri2}
\end{center}
\end{figure}

These two examples suggest us to consider vector-valued rather than scalar functions. A similar (but slightly more complicated) example is exhibited in \cite{CaFePo01}.

The second example, illustrated in Figure \ref{ex2}, shows the image of two functions $f,g:S^1\to \R^2$ such that   the persistent homology groups of $ f$ and $ g$ coincide at every $( u, v)\in \Delta ^+$, as can be checked by a direct computation. However, there does not exist any $C^1$-diffeomorphism $h: S^1\to S^1$ such that $f=g\circ h$ because $f(S^1)\ne g(S^1)$.

\begin{figure}[h]
\begin{center}
\psfrag{f1}{$f_1$}\psfrag{f2}{$f_2$}\psfrag{g1}{$g_1$}\psfrag{g2}{$g_2$}
\includegraphics[width=10cm]{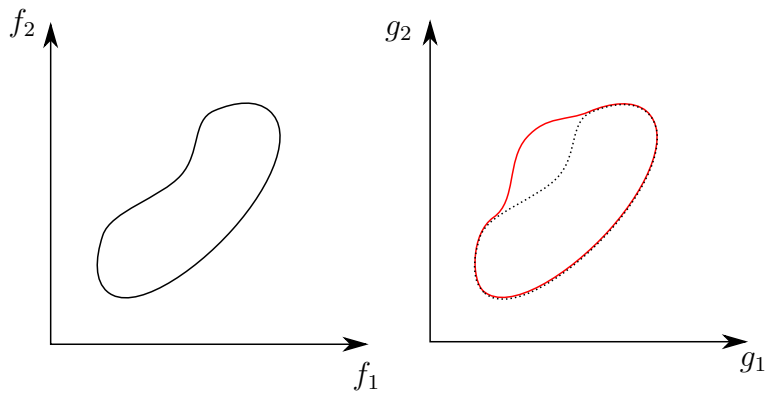}
\caption{The curves $f=(f_1,f_2):S^1\to \R^2$ and $g=(g_1,g_2): S^1\to \R^2$ cannot be distinguished by their persistent homology groups.}\label{ex2}
\end{center}
\end{figure}

This example suggests us that it is not enough to require that $H_0^{ u, v}(f)=H_0^{ u, v}(g)$ for every $( u, v)\in \Delta^+$, but we should take stronger assumptions such as that also $H_0^{ u, v}(s\circ f)=H_0^{ u, v}(s\circ g)$ for every $( u, v)\in \Delta^+$, and every $s:\R^2\to \R^2$ obtained via composition of reflections with respect to the coordinate axes.

The last example shows that, even under these stronger assumptions, the model uniqueness  fails, and suggests us to add the assumption that there are no two distinct points $\theta_1,\theta_2$ in $S^1$ such that $f(\theta_1)=f(\theta_2)$ and  $\mathrm{im}\, d_{\theta_1}f = \mathrm{im}\, d_{\theta_2}f$.
Indeed, the curves $f=(f_1,f_2):S^1\to \R^2$ and $g=(g_1,g_2): S^1\to \R^2$ illustrated in Figure \ref{ex3} cannot be distinguished by their persistent homology groups, as can be seen by direct computations.
However, no $C^1$-diffeomorphism $h: S^1\to S^1$ exists such that $f=g\circ h$. Indeed, if it were the case,  $h$ should take the two points $\theta_1$, $\theta_2$ where $f_2$ takes its minimum into the two points $\tilde\theta_1$, $\tilde\theta_2$ were $g_2$ takes its minimum, and an arc  between $\theta_1$ and $\theta_2$ into an arc between $\tilde\theta_1$ and $\tilde \theta_2$.   It is easy to see that, for any possible choice of these arcs, the image through  $g$ would not coincide with that through $f$.

\begin{figure}[h]
\begin{center}
\begin{tabular}{|c|c|}
\hline
 & \\
\begin{minipage}{0.5\linewidth}\centering
\psfrag{f1}{$f_1$}\psfrag{f2}{$f_2$}\psfrag{g1}{$g_1$}\psfrag{g2}{$g_2$}
\includegraphics[width=7cm]{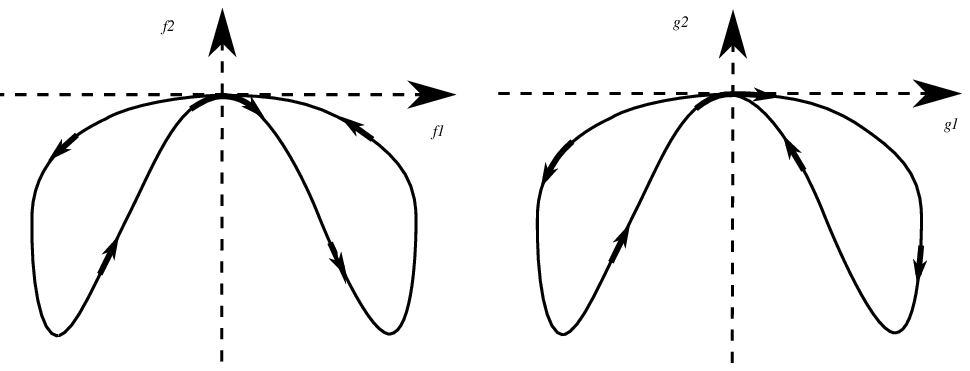} \end{minipage} &
\begin{minipage}{0.5\linewidth}\centering
\psfrag{f1}{$-f_1$}\psfrag{f2}{$f_2$}\psfrag{g1}{$-g_1$}\psfrag{g2}{$g_2$}
\includegraphics[width=7cm]{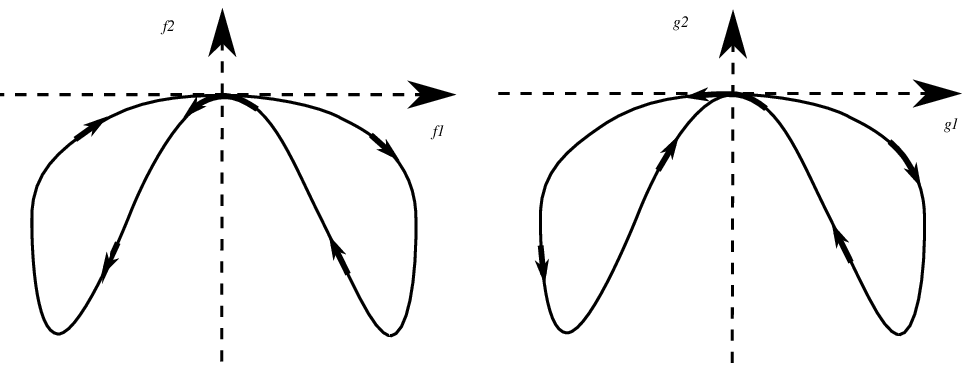}\end{minipage}\\
 & \\
\hline
 & \\
\begin{minipage}{0.5\linewidth}\centering
\psfrag{f1}{$f_1$}\psfrag{f2}{$-f_2$}\psfrag{g1}{$g_1$}\psfrag{g2}{$-g_2$}
\includegraphics[width=7cm]{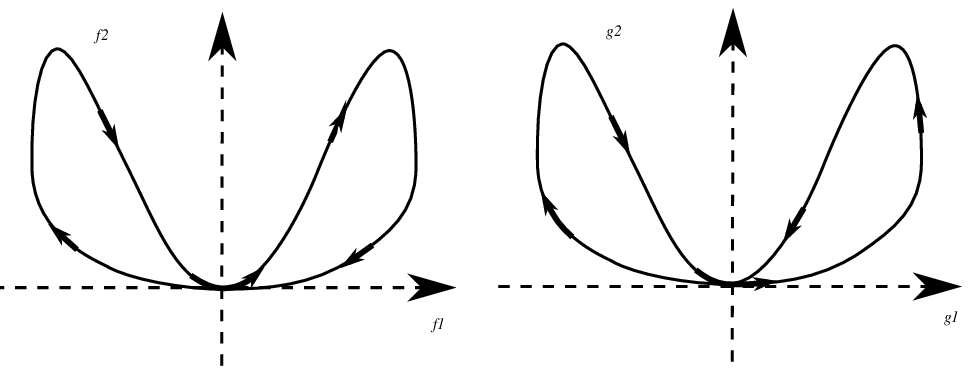} \end{minipage}&
\begin{minipage}{0.5\linewidth}\centering
\psfrag{f1}{$-f_1$}\psfrag{f2}{$-f_2$}\psfrag{g1}{$-g_1$}\psfrag{g2}{$-g_2$}
\includegraphics[width=7cm]{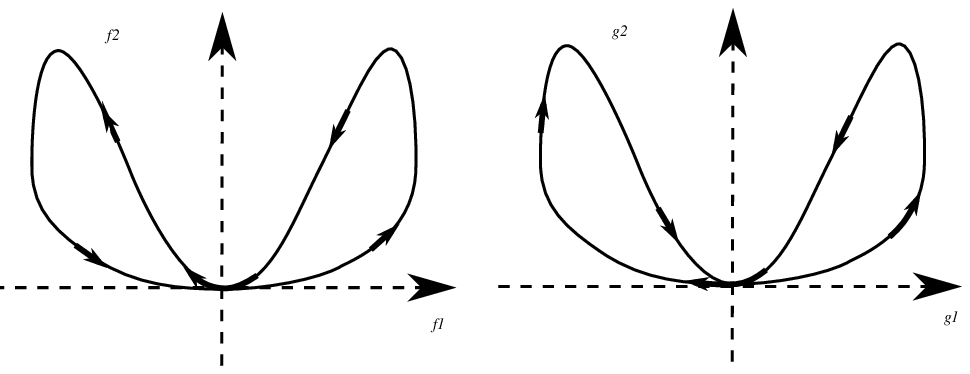}\end{minipage}\\
& \\ \hline
\end{tabular}
\caption{The curves $f=(f_1,f_2):S^1\to \R^2$ and $g=(g_1,g_2): S^1\to \R^2$ cannot be distinguished by their persistent homology groups. Analogously for the pairs of curves $s_1\circ f=(-f_1,f_2)$ and $s_1\circ g=(-g_1,g_2)$, $s_2\circ f=(f_1,-f_2)$ and $s_2\circ g=(g_1,-g_2)$, $s_2\circ s_1\circ f=(-f_1,-f_2)$ and $s_2\circ s_1\circ g=(-g_1,-g_2)$. }\label{ex3}
\end{center}
\end{figure}

These examples lead us to study the uniqueness problem taking functions as in the following definition, and assuming to have information also on the persistent homology groups of the functions obtainable by composition with reflections. The choice of confining ourselves to the following set of functions is not very restrictive since it is a dense set.

\begin{defn}\label{d1}
A function $f:S^1\rightarrow \R^2$ will be called {\em generic} if it is $C^1$ and the following properties hold:
\begin{enumerate}
\item[(C1)] $f$ is an immersion, i.e. $d_{\theta}f$ has rank equal to one for every $\theta\in S^1$;
\item[(C2)] $f(S^1)$ has at most a finite number of multiple points, all of them are double points and they are {\em clean}, i.e. $f(\theta_1)=f(\theta_2)$ and $\mathrm{im}\ d_{\theta_1}f= \mathrm{im}\ d_{\theta_2}f$ imply $\theta_1=\theta_2$, for every $\theta_1,\theta_2\in S^1$.
\end{enumerate}
\end{defn}

\begin{prop}\label{generic}
The set of generic functions is dense and open in $C^1(S^1,\R^2)$.
\end{prop}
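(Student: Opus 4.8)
The plan is to obtain the statement from the Thom jet‑ and multijet‑transversality theorems together with the compactness of $S^1$, treating (C1) and (C2) in turn and, for each, proving openness and density separately. \textbf{Condition (C1).} Being an immersion is an open condition in $C^1(S^1,\R^2)$: the condition $d_\theta f\neq 0$ is open on $1$-jets and, $S^1$ being compact, $\min_{\theta\in S^1}|d_\theta f|>0$ is preserved under small $C^1$-perturbations. It is dense because the rank‑$0$ stratum of the jet bundle $J^1(S^1,\R^2)$ has codimension $2>1=\dim S^1$, so by jet transversality a generic $f$ has $j^1f$ disjoint from it; equivalently one may simply invoke density of immersions of a $1$-manifold in the plane.

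\textbf{Condition (C2), density.} I would fix an immersion $f$ and study the two auxiliary maps $G\colon(S^1\times S^1)\setminus\Delta\to\R^2$, $G(\theta_1,\theta_2)=f(\theta_1)-f(\theta_2)$, and $T\colon(S^1)^3\setminus(\text{diagonals})\to\R^2\times\R^2$, $T(\theta_1,\theta_2,\theta_3)=\big(f(\theta_1)-f(\theta_2),\,f(\theta_1)-f(\theta_3)\big)$. Off $\Delta$ the zero set of $G$ is exactly the set of ordered pairs of distinct preimages of a multiple point, and at such a pair $d_{(\theta_1,\theta_2)}G$ is onto if and only if $\mathrm{im}\,d_{\theta_1}f+\mathrm{im}\,d_{\theta_2}f=\R^2$, which — $f$ being an immersion — is precisely cleanness of that double point. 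Hence ``$0$ is a regular value of $G$'' is equivalent to ``every multiple point of $f$ is a clean double point''; and in that case $G^{-1}(0)$ is a $0$-submanifold which, since the immersion property yields a uniform estimate $|f(\theta)-f(\theta+t)|\geq c|t|$ for small $|t|$, stays away from $\Delta$, hence is compact, hence finite. To exclude triple (and higher) points it suffices that $T$ be transverse to $\{0\}$: as $\dim(S^1)^3=3<4$, transversality forces $T^{-1}(0)=\emptyset$. Both properties hold for $f$ in a residual set by the multijet transversality theorem; equivalently, one checks directly that a finite‑parameter family of perturbations — e.g.\ adding trigonometric polynomials $\sum_{k\leq N}(a_k\cos k\theta+b_k\sin k\theta)$ with $N$ large — already makes the relevant evaluation maps submersions jointly in the curve parameters and the perturbation parameters, so that the parametric transversality theorem applies. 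Intersecting with the open dense set of immersions from (C1) gives density of the generic functions.

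\textbf{Condition (C2), openness.} Let $f$ be generic, let $A=\{a_1,\dots,a_{2n}\}\subset S^1$ be the set of preimages of its double points (pairwise distinct, since there are no triple points), and let $P=G^{-1}(0)=\{(a,\sigma(a)):a\in A\}$, where $\sigma$ is the fixed‑point‑free involution pairing the two preimages of each double point. I would choose pairwise disjoint closed arcs $I_j\ni a_j$ small enough that each $I_j\times I_j$ lies in a tubular neighbourhood $N$ of $\Delta$ where the linear lower bound above holds, that $dG$ is onto on each product $I_j\times I_k$ with $(a_j,a_k)\in P$, and that each such product contains a unique zero of $G$ (local submersion normal form). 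On the compact set $K=(S^1\times S^1)\setminus\big(N\cup\bigcup_{(a_j,a_k)\in P}I_j\times I_k\big)$, $|G|$ is bounded below. All of these properties persist for $g$ in a $C^1$-neighbourhood of $f$; hence for such $g$ every zero of $G_g$ lies in some selected product $I_j\times I_k$, so every multiple point of $g$ is clean, there are at most $n$ of them, and none is a triple point — a triple point of $g$ would produce three zeros of $G_g$ among the selected products and force some $a_j$ to be a common preimage of two distinct double points, contradicting that $\sigma$ is an involution. Together with the openness of (C1) this shows that the generic functions form an open set.

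\textbf{Main obstacle.} The only nonformal step is the density half of (C2): $C^1(S^1,\R^2)$ is infinite‑dimensional, so one must either invoke the multijet transversality theorem in the appropriate form or exhibit by hand a finite‑parameter family of perturbations rich enough to make the joint evaluation maps built from $G$ and $T$ submersive — and one must carefully keep the genuine self‑intersections (zeros of $G$ off $\Delta$) apart from the trivial zeros on $\Delta$, which is exactly where hypothesis (C1) enters. The remaining bookkeeping, in particular ruling out newly created triple points in the openness argument, is routine once the perturbed self‑intersections have been localized near the finite set $A$.
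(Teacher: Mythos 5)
Your proposal is correct, but it follows a genuinely different route from the paper. The paper's proof is short and citation-based: it first approximates in $C^1$ by $C^2$ functions, then invokes the Jet Transversality Theorem (for the immersion condition) and the Multijet Transversality Theorem (for condition (C2)) to get two residual subsets of $C^2(S^1,\R^2)$, intersects them, and concludes density in $C^1(S^1,\R^2)$; openness is obtained by quoting the openness of $C^1$-immersions and of immersions with clean (normal-crossing) double points from Hirsch's book. You instead work directly with the double-point map $G(\theta_1,\theta_2)=f(\theta_1)-f(\theta_2)$ and the triple-point map $T$, identify cleanness with regularity of the value $0$ for $G$ and absence of triple points with the dimension count $3<4$, and obtain density via parametric transversality with an explicit finite-dimensional family of trigonometric perturbations; your openness argument is a hands-on localization of the self-intersection set near the finitely many preimages of double points, using the uniform estimate $|f(\theta_1)-f(\theta_2)|\ge c\,d(\theta_1,\theta_2)$ near the diagonal (which is exactly where (C1) enters) plus a lower bound for $|G|$ on the complementary compact set. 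You correctly flag the one delicate point: multijet transversality as usually stated is a $C^\infty$ (or high-regularity) theorem, so it cannot be applied verbatim inside $C^1$ — the paper resolves this by the preliminary $C^2$-approximation, while your fallback resolves it differently, since the Sard-type argument behind parametric transversality only needs $C^1$ regularity here (the relevant source dimensions do not exceed the target dimensions). In exchange for being longer, your argument is self-contained, avoids the detour through $C^2$, and makes explicit the persistence and localization of double points that the paper delegates to the literature; the paper's version is more economical and generalizes more readily by quoting the standard transversality machinery.
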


\begin{proof}
Let us see that generic functions are dense in  $C^1(S^1,\R^2)$.
First of all $C^2(S^1,\R^2)$ is dense in $C^1(S^1,\R^2)$ (cf. \cite{Hi76}). The set of $C^2$-immersions of a manifold of dimension $1$ into a manifold of dimension $2$ is residual as an application of the  Jet Transversality Theorem, and, by the Multijet Transversality Theorem, also the set of functions satisfying (C2) is residual (cf. \cite{De00, Hi76}). Thus, the sets of $C^2$-functions separately satisfying conditions (C1) and (C2) are  residual  in $C^2(S^1,\R^2)$.  Moreover, any intersection of residual sets is still residual, and hence dense.  As a consequence, arbitrarily close to any $C^1$-function we can find a $C^2$-immersion (in particular of class $C^1$) satisfying (C2). So the set of generic functions is dense in $C^1(S^1,\R^2)$.

Finally, since the set of $C^1$-immersions is open in $C^1(S^1,\R^2)$ and the set of immersions with clean double points is open in the space of $C^1$-immersions (cf. \cite{Hi76}), the set of generic functions is  open in $C^1(S^1,\R^2)$.
\end{proof}

\section{Main results}

In this section we present the main results of this paper. Theorem \ref{th1} answers affirmatively to the uniqueness problem for generic functions and assuming  information is available also on the persistent homology groups of the functions obtainable by composition with reflections. Theorem \ref{th2} extends the previous result to the case when data are perturbed. Roughly speaking, it states that if two functions $f$ and $g$, together with their composition with reflections, give rise to close persistent Betti numbers,  then $f$ and $g$ are close to each other (in both cases closeness is meant with respect to a suitable distance).

Let $s_i:\mathbb{R}^2\to\mathbb{R}^2$, with $i=1,2$, be the reflections with respect to the coordinate axes: $s_1(x_1, x_2)=(-x_1,x_2)$, $s_2(x_1, x_2)=(x_1,-x_2)$. Let $\Sigma_2$ be the set of functions obtainable through finite composition of the reflections $s_1,s_2$ (obviously, $id\in \Sigma_2$).

\begin{thm}\label{th1}
Let $f,g:S^1\to \R^2$ be generic functions. If $H_0^{ u, v}(s\circ f)= H_0^{ u, v}(s\circ g)$ for every $( u, v)\in \Delta^+$ and every $s\in \Sigma_2$, then there exists a $C^1$-diffeomorphism $h:S^1\to S^1$ such that $g\circ h=f$.
\end{thm}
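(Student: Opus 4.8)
The plan is to reconstruct from the persistent homology data, for each of the four functions $s\circ f$ ($s\in\Sigma_2$), the combinatorial/metric structure of the critical points of the two coordinate functions, and then to use this to build the diffeomorphism $h$ explicitly. The key observation is that the $0$th bi-dimensional persistent Betti numbers of $s\circ f$ encode, through their discontinuities along the diagonal-type strata, exactly the pairing of the connected components of sublevel sets of $(sf)_1$ and $(sf)_2$; varying $s$ over the four reflections lets us simultaneously see sublevel *and* superlevel sets of both $f_1$ and $f_2$. Since $f$ is generic (an immersion with only clean double points), the points of $S^1$ where $f_1$ or $f_2$ has vanishing derivative are isolated and nondegenerate in the relevant sense, and the tangent line $\mathrm{im}\,d_\theta f$ is horizontal exactly at the critical points of $f_2$ and vertical exactly at the critical points of $f_1$.

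First I would extract, from $\{H_0^{u,v}(f)\}$ and $\{H_0^{u,v}(s_2\circ f)\}$, the ordered list of critical values of $f_2$ together with the "merging tree" of the sublevel and superlevel sets of $f_2$ — i.e. the cyclic sequence of local maxima and minima of $f_2$ around $S^1$ and their values. This is the classical fact that for a function on $S^1$ the $0$th persistence diagram (sublevel and superlevel together) determines the values and the interleaving pattern of its extrema. Symmetrically, using $s_1\circ f$ and $s_1\circ s_2\circ f$, I get the same data for $f_1$. Next I would merge these two pieces of information: using the full bi-dimensional persistence (not just the two axis projections) one recovers, for each arc of $S^1$ between consecutive critical points of $f_2$, how $f_1$ behaves on it — in particular the position of the critical points of $f_1$ relative to those of $f_2$ around the circle. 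The clean-double-point hypothesis (C2) is what guarantees this reconstruction is unambiguous: it rules out exactly the degeneracies exhibited in Figures \ref{ex2} and \ref{ex3}, where two different arc configurations produced the same persistence. Having done this for $f$ and for $g$ and found the data identical, I obtain a canonical cyclic correspondence between the critical points of $f$ and those of $g$ preserving values of both coordinates and cyclic order.

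Finally I would construct $h$ arc-by-arc: on each arc of $S^1$ bounded by consecutive critical points (of either coordinate) of $f$, the map $f$ is a $C^1$ embedding onto an arc in $\R^2$ that is monotone in both coordinates, and the corresponding arc of $g$ is an embedding onto the *same* arc of $\R^2$ (same endpoints, same monotonicity type), so $h:=g^{-1}\circ f$ is a well-defined $C^1$-diffeomorphism of that arc; since (C1) guarantees $dg\ne 0$ everywhere, $g^{-1}$ is $C^1$ and the derivative of $h$ is nonzero, and the pieces glue to a global $C^1$-diffeomorphism of $S^1$ because the correspondence of critical points preserves tangent directions. The main obstacle I expect is the second step — proving rigorously that the four families of persistent Betti numbers actually *determine* the interleaved cyclic arrangement of the critical points of $f_1$ and $f_2$ (and the monotone-arc decomposition of $f(S^1)$), rather than merely the two one-dimensional persistence diagrams separately; this is precisely where genericity must be used in full force, and where the negative examples show the statement is sharp, so it is the heart of the argument and will require a careful case analysis of how a double point or a critical point registers as a discontinuity of $H_0^{u,v}$.
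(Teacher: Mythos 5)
Your proposal contains genuine gaps, and the most serious one is already visible in its first step. The ``classical fact'' you invoke is false: the $0$th persistence diagrams of the sublevel and superlevel sets of a scalar function on $S^1$ determine the multiset of birth--death pairs, but \emph{not} the cyclic interleaving pattern of the extrema. This is exactly what the paper's first counterexample shows (Figures \ref{tri} and \ref{tri2}): two functions whose persistent homology groups coincide for both the function and its negative, yet whose critical points have different adjacencies, so no reparameterization exists. Hence the data you propose to extract in step one is simply not determined by the hypotheses, and the burden falls entirely on your second step --- recovering the interleaved cyclic arrangement from the genuinely bi-dimensional persistence --- which you yourself flag as the main obstacle and leave unproved; but that step \emph{is} the theorem, not a technical detail. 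There are two further problems. Under (C1)--(C2) alone the coordinate functions $f_1,f_2$ need not have isolated, nondegenerate critical points: the derivative of $f_2$ may vanish on whole arcs (the image can contain horizontal and vertical segments), so ``consecutive critical points'' and your arc-by-arc decomposition are not well defined; the paper deliberately avoids assuming the extra condition (C3) that would exclude this and has to treat such arcs by a separate limiting argument. Finally, even granting a perfect matching of critical data, your gluing step is unjustified: two arcs monotone in both coordinates with the same endpoints need not have the same image in $\R^2$, so $h:=g^{-1}\circ f$ is not obviously well defined --- well-definedness requires $f(S^1)=g(S^1)$, which your argument never establishes.

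For comparison, the paper's proof does no combinatorial reconstruction at all. It proves directly that $f(S^1)=g(S^1)$: if some point of $f(S^1)$ with non-axis-parallel normal were missed by $g(S^1)$, one picks a reflection $s'\in\Sigma_2$ making the local branch of $s'\circ f$ ``top-right transversal'' to a small rectangle $R$ disjoint from $s'\circ g(S^1)$, and an inclusion--exclusion computation of $\mathrm{rk}\,H_0^{u,v}$ at four points $a,b,c,d$ near the top-right vertex (the Appendix lemma) yields $1$ for $s'\circ f$ but $0$ for $s'\circ g$, contradicting the hypothesis; the horizontal and vertical arcs are then absorbed by a continuity argument that uses cleanness of double points. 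Once the images coincide, $h$ is defined pointwise by inverting $g$ (cleanness selects the correct preimage at double points) and is locally of the form $g^{-1}_{|V}\circ f_{|U}$, hence a $C^1$-diffeomorphism. Any repair of your approach would in effect have to reprove this image equality, so the rectangle/inclusion--exclusion argument (or something equivalent) is the missing core of your proposal.
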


\begin{proof}
Since $f$ is generic, in particular it is an immersion. So, for each point $\theta\in S^1$, we can consider an open neighborhood $U$ of $\theta$ in $S^1$, such that $f_{|U}$ is a $C^1$-diffeomorphism onto its image. The line $l_\theta$ orthogonal to the  line tangent to $f(U)\subset \R^2$ at $f(\theta)$ is independent of the neighborhood $U$, and depends only on the point $\theta$.
Let $N_f$ be the set of all points $\theta$ of $S^1$ such that $l_\theta$ is not parallel to the coordinate axes of $\R^2$. The set $N_g$ is defined analogously. 

First of all, we shall prove that $f(\bar N_f)\subseteq g(S^1)$, where $\bar N_f$ is the closure of $N_f$ in $S^1$. We observe that since $f$ is  $C^1$, $N_f$ is non-empty and open in $S^1$.

By contradiction, let us assume that there exists a point $ u \in f(\bar N_f)\setminus g(S^1)$. For every $\epsilon >0$, a point $ u'\in f(N_f)\setminus g(S^1)$ exists such that $\| u- u'\|_2<\epsilon$, and $u'$ is not a double point of $f(S^1)$. Indeed, $g(S^1)$ is a closed set and $f$ is generic (in particular, $f(S^1)$ has at most a finite number of multiple points).

We set $\theta'=f^{-1}( u')$. Clearly $\theta'\in N_f$, implying that there is an $s'\in \Sigma_2$ such that $s'(l_{\theta'})$ has a unit direction  vector $w'=(w_1',w_2')$ with $w_1'>0$ and $w_2'>0$ (see Fig.~\ref{w}).

\begin{figure}[h]
\begin{center}
\includegraphics[width=10cm]{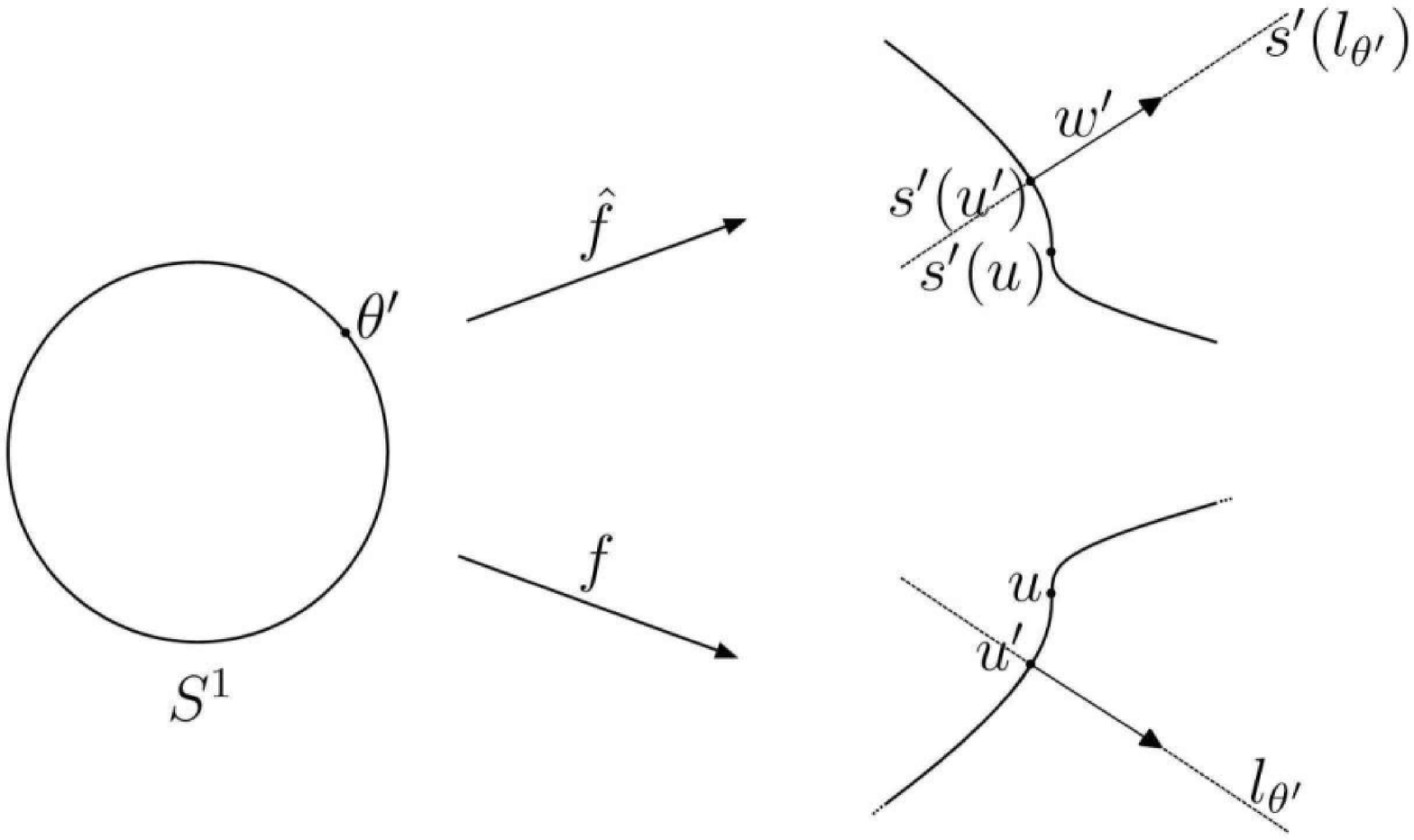}
\caption{The construction in the proof of Theorem~\ref{th1}.}\label{w}
\end{center}
\end{figure}

\begin{figure}[h]
\begin{center}
\psfrag{G}{$\hat f(\gamma)$}\psfrag{R}{$R$}\psfrag{a}{$a$}\psfrag{b}{$b$}\psfrag{c}{$c$}\psfrag{d}{$d$}\psfrag{v}{$v$}\psfrag{s}{$s'(u')$}\psfrag{s'}{$s'(u)$}
\includegraphics[height=4cm]{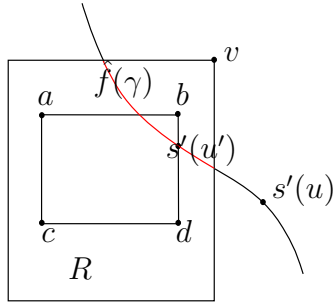}
\caption{The position of the points $a,b,c,d$ used in the proof of Theorem~\ref{th1}, with $\hat f$ top-right transversal to $R$.}\label{quad}
\end{center}
\end{figure}

We set $\hat f=s'\circ f$ and $\hat g=s'\circ g$. Because of this choice of $s'$, and since $s'(u')\in\hat f(S^1)\setminus \hat g(S^1)$, an open rectangle $R$ in $\mathbb{R}^2$ exists, with sides parallel to the coordinate axes, such that (see Fig.~\ref{quad})
\begin{enumerate}
  \item The set $\{\theta\in S^1:\hat f(\theta)\in R\}$ is an open connected arc $\gamma=\stackrel{\curvearrowright}{\theta_1\theta_2}$ (clockwise oriented) such that either $\hat f_1$ is increasing and $\hat f_2$ is decreasing on $\gamma$, or $\hat f_1$ is decreasing and $\hat f_2$ is increasing on $\gamma$ (as a consequence, the endpoints of $\hat f(\gamma)$ belong to $\partial R$);
  \item $R$ does not meet $\hat g(S^1)$.
\end{enumerate}
Indeed,  since $w_1'>0$ and $w_2'>0$, any non-vanishing tangent vector to $\hat f(\gamma)$ must have at least one strictly positive component.
In the following, when property (i) holds with respect to a rectangle $R$ we shall say that $\hat f$ is \emph{top-right transversal to $R$}.

Let $ v=(v_1,v_2)$ be the top-right vertex of $R$.
By property (i),  in $R\setminus\hat f(\gamma)$ we can take four points $a=(a_1,a_2)$, $b=(b_1,b_2)$, $c=(c_1,c_2)$, $d=(d_1,d_2)$ with $a_1=c_1<b_1=d_1<v_1$ and $c_2=d_2<a_2=b_2<v_2$, such that $a,c,d$ do not belong to the connected component of $b$ in $R\setminus\hat f(\gamma)$ and the segment $\overline{db}$ contains the point $s'( u')$.

We claim that
\begin{equation}\label{eq1}
    \mathrm{rk}\ H_0^{b, v}(\hat f)-\mathrm{rk}\ H_0^{d, v}(\hat f)-\mathrm{rk}\ H_0^{a, v}(\hat f)+\mathrm{rk}\ H_0^{c, v}(\hat f)=1.
\end{equation}

Indeed, with respect to the $C^1$ function $\hat f$, the number of connected components that are ``born'' between $c$ and $a$ and still not merged at $ v$ is one less than  the number of those ``born'' between $d$ and $b$ and ``still alive'' at $ v$. This is due to the presence of the connected component containing the point $s'( u')$.

On the other hand, since $R\cap \hat g(S^1)=\emptyset$
we have that
\begin{equation}\label{eq2}
    \mathrm{rk}\ H_0^{b, v}(\hat g)-\mathrm{rk}\ H_0^{d, v}(\hat g)-\mathrm{rk}\ H_0^{a, v}(\hat g)+\mathrm{rk}\ H_0^{c, v}(\hat g)=0.
\end{equation}

Indeed, with respect to the $C^1$ function $\hat g$, the number of connected components that are \lq\lq born'' between $c$ and $a$ and still not merged at $ v$ is equal to the number of those \lq\lq born'' between $d$ and $b$ and \lq\lq still alive'' at $ v$.
This fact contradicts the assumption that $H_0^{ u, v}(s\circ f)= H_0^{ u, v}(s\circ g)$ for every $( u, v)\in \Delta^+$ and every $s\in \Sigma_2$.

A formal proof of the equalities (\ref{eq1}) and (\ref{eq2}) will be given in the Appendix.

Therefore, we have proved that $f(\bar N_f)\subseteq g(S^1)$.
In the same way, we can prove that $g(\bar N_g)\subseteq f(S^1)$, where $\bar N_g$ is the closure of $N_g$ in $S^1$.

Now, let us prove that $f(S^1)\setminus f(\bar N_f)\subseteq g(S^1)$. Let us assume that $f(S^1)\setminus f(\bar N_f)\ne \emptyset$, otherwise the claim is trivial. Hence, let us take a point $u\in f(S^1)\setminus f(\bar N_f)$, and $\theta\in S^1$ such that $f(\theta)=u$. Let us consider the maximal open connected arc $\alpha$ in $S^1\setminus \bar N_f$ containing $\theta$. We shall prove that $f(\alpha)\subset g(S^1)$, which implies that $u\in g(S^1)$.

Because of the definition of $N_f$ and the regularity of $f$,  $f(\alpha)$ is   either a horizontal or a vertical segment. Let us assume that $f(\alpha)$ is a horizontal segment (the other case can be treated quite analogously). The arc $\alpha$ necessarily has two distinct endpoints $\theta^{in},\theta^{out}$ (listed counterclockwise). Possibly by changing $f$ into $\tilde f=s_1\circ f$, and $g$ into $\tilde g=s_1\circ g$, we can also assume that the horizontal segment $\alpha$ proceeds from left to right while the parameter $\theta$ increases.
We observe that  $N_f=N_{\tilde f}$ and $N_g=N_{\tilde g}$.

The points $f(\theta^{in}),f(\theta^{out})$ are the endpoints of the horizontal segment $f(\alpha)$. Since $f$ is $C^1$ we have that $\mathrm{im}\, d_{\theta^{in}}f$ is a horizontal line. Because of the maximality of $\alpha$,
%we have that $\theta^{in}\in \partial N_f\subseteq \bar N_f$. As a consequence,
we can find a sequence $(\theta_i)$ of points of $N_f$ converging counterclockwise to $\theta^{in}$. We already know that for each $\theta_i$ a point $\theta'_i$ exists such that $g(\theta'_i)=f(\theta_i)$. Possibly by extracting a subsequence, we can assume that $(\theta'_i)$ converges to a point $\theta'$. Since $f$ and $g$ are continuous, we get  $f(\theta^{in})=g(\theta')$. Furthermore, the equalities $g(\theta'_i)=f(\theta_i)$ imply that there is a sequence of coinciding incremental ratios for $f$ and $g$. Since $f$ and $g$ are $C^1$, we thus get that $\mathrm{im}\, d_{\theta^{in}}f=\mathrm{im}\, d_{\theta'}g$, and hence both of these lines are horizontal. Possibly by substituting $\theta$ with $-\theta$ as a parameter for $g$, we can also assume that the parallel and non-vanishing vectors
$\frac{df}{d\theta}(\theta^{in})$ and $\frac{dg}{d\theta}(\theta')$ have the same sense. We observe that the passage from $g(\theta)$ to $g(-\theta)$ does  change neither  $g(S^1)$ nor $N_g$.

Now we consider the last point $\theta^*$ in the closure of $\alpha$  (orienting $\alpha$ from $\theta^{in}$ to $\theta^{out}$) verifying the following property:
\begin{itemize}
  \item For every point $\bar\theta$ in the (possibly degenerate) closed arc from $\theta^{in}$ to $\theta^*$, a point $\bar\theta'$ exists for which $f(\bar\theta)=g(\bar\theta')$, and the non-vanishing vectors
$\frac{df}{d\theta}(\bar\theta)$ and $\frac{dg}{d\theta}(\bar\theta')$ are parallel and have the same sense.
\end{itemize}
We have seen that at least $\theta^{in}$ satisfies this property. We can prove that $\theta^*=\theta^{out}$. In order to do this, let us assume that $\theta^*\neq\theta^{out}$ and show that this implies a contradiction. Let $\theta'^*$ be a point in $S^1$ such that $f(\theta^*)=g(\theta'^*)$ and  $\frac{df}{d\theta}(\theta^*)$ and $\frac{dg}{d\theta}(\theta'^*)$ are parallel and have the same sense.  

In case there is no sequence of points of $N_g$ converging clockwise to $\theta'^*$, any sufficiently small open arc $\beta$ whose closure $\bar \beta$ contains $\theta'^*$ as a start point (with $\bar\beta$ counterclockwise oriented) is such that $\bar\beta\subset S^1\setminus\bar N_g$. By recalling that $\mathrm{im}\, d_{\theta'^*}g$ is a horizontal line, we get that $g(\bar\beta)$ is a horizontal segment. Since $\theta^*\neq\theta^{out}$ and $f(\alpha)$ is a horizontal segment, the point $g(\theta'^*)=f(\theta^*)$ does not equal $f(\theta^{out})$. Therefore, $g(\bar\beta)\subset f(\alpha)$ for any sufficiently small $\beta$, contradicting the definition of $\theta^*$.

Let us now consider the case when a sequence $(\theta'_i)$ of points of $N_g$ converges clockwise to $\theta'^*$. Because of the definition of the set $N_g$, possibly perturbing each point in the sequence, we can  assume that no point $g(\theta'_i)$ belongs to $f(\bar\alpha)$ (where $\bar \alpha$ denotes the closure of the arc $\alpha$).
We already know that, for each $\theta'_i$, a point $\hat\theta_i$ exists such that $f(\hat\theta_i)=g(\theta'_i)$. Possibly by extracting a subsequence, we can assume that $(\hat\theta_i)$ converges to a point $\hat\theta$. Since $f$ and $g$ are continuous, we get that $g(\theta'^*)=f(\hat\theta)$. Now, either $\hat\theta=\theta^*$ or $\hat\theta\ne \theta^*$.

Let $\hat\theta=\theta^*$.  Since $\theta^*\ne \theta^{out}$,   $(\hat\theta_i)$ converges to  $\hat\theta$ counterclockwise. We recall that  $\frac{df}{d\theta}( \theta^*)$ and $\frac{dg}{d\theta}( \theta'^*)$ are both non-vanishing horizontal vectors pointing to the right. This contradicts the fact that   $\frac{df}{d\theta}( \theta^*)=\lim_{i\to \infty}\frac{f(\hat\theta)-f(\hat\theta_i)}{\hat \theta-\hat\theta_i}=\lim_{i\to \infty}\frac{g(\theta'^*)-g(\theta'_i)}{\theta'^*-\theta'_i}\cdot \frac{\theta'^*-\theta'_i}{\hat\theta-\hat\theta_i} $, because $\lim_{i\to \infty}\frac{g(\theta'^*)-g(\theta'_i)}{\theta'^*-\theta'_i}=\frac{dg}{d\theta}( \theta'^*)$ and $\frac{\theta'^*-\theta'_i}{\hat\theta-\hat\theta_i}<0$ for every $i$ sufficiently large.

Now, let $\hat\theta\ne\theta^*$.  The equalities $f(\hat\theta_i)=g(\theta'_i)$ imply that there is a sequence of coinciding incremental ratios for $f$ and $g$. Since $f$ and $g$ are $C^1$, we thus get that $\mathrm{im}\, d_{\theta'^*}g=\mathrm{im}\, d_{\hat \theta}f$.
Now, the equalities $f(\theta^*)=g(\theta'^*)$ and $\mathrm{im}\, d_{\theta^*}f=\mathrm{im}\, d_{\theta'^*}g$ imply that $f(\theta^*)=f(\hat\theta)$ and $\mathrm{im}\, d_{\theta^*}f=\mathrm{im}\, d_{\hat\theta}f$, with $\hat\theta\neq \theta^*$. This contradicts the assumption that the double points of $f$ are clean (property (C2)).

Therefore, in any case the assumption $\theta^*\neq\theta^{out}$ implies a contradiction, so that it must be $\theta^*=\theta^{out}$. Hence the inclusion $f(\alpha)\subset g(S^1)$ is proven.

Therefore, we have proved that $f(S^1)\setminus f(\bar N_f)\subseteq g(S^1)$. In the same way, we can prove that $g(S^1)\setminus g(\bar N_g)\subseteq f(S^1)$.

In conclusion, we have proved that $f(S^1)= g(S^1)$.

Let us now construct the $C^1$-diffeomorphism $h:S^1\to S^1$ such that $g\circ h=f$. Since $f$ is generic, there is a finite set $\Theta_f=\{\theta^f_1,\ldots,\theta^f_r\}\subset S^1$ such that $f_{|S^1\setminus \Theta_f}$ is a $C^1$-diffeomorphism between $S^1\setminus \Theta_f$ and $f(S^1\setminus \Theta_f)$ (see properties (C1) and (C2) in Def.~\ref{d1}). Analogously, the genericity of $g$ implies that a finite set $\Theta_g=\{\theta^g_1,\ldots,\theta^g_s\}\subset S^1$ exists, such that $g_{|S^1\setminus \Theta_g}$ is a $C^1$-diffeomorphism between $S^1\setminus \Theta_g$ and $f(S^1\setminus \Theta_g)$.

Since $f(S^1)=g(S^1)$, for any $\theta\in S^1$ the set $g^{-1}\left(f(\theta)\right)$ is not empty. Moreover, since in particular $f(\Theta_f)=g(\Theta_g)$, if $\theta\in S^1\setminus \Theta_f$, the set $g^{-1}\left(f(\theta)\right)$ contains only one point $\theta'$ and we can define $h(\theta)=\theta'$. If $\theta\in \Theta_f$, we have that $g^{-1}\left(f(\theta)\right)=\{\theta'_1,\theta'_2\}$. In this case,  there is just one point $\theta'_i\in g^{-1}\left(f(\theta)\right)$ such that $\mathrm{im}\ dg_{\theta'_i}= \mathrm{im}\ df_{\theta}$, because, by property (C2) in Def.~\ref{d1}, double points of $g$ are clean. Thus, we can define $h(\theta)=\theta'_i$.

Because of its definition, the function $h$ verifies the equality $g\circ h=f$. We claim that $h$ is a $C^1$-diffeomorphism.
Indeed, recalling that  $f(S^1)=g(S^1)$, the definition of $h$ implies that $h$ is injective and surjective.
Furthermore, for each point $\theta\in S^1$,  there exist an open neighborhood $U(\theta)$ of $\theta$ in $S^1$ such that $f_{|U(\theta)}$ is a $C^1$-diffeomorphism,
 a point $\theta'\in S^1$ for which $g(\theta')=f(\theta)$, and
an open neighborhood $V(\theta')$ of $\theta'$ in $S^1$ such that $g_{|V(\theta')}$ is a $C^1$-diffeomorphism and  $g(V(\theta'))=f(U(\theta))$. Hence, $h_{|U(\theta)}$ equals the $C^1$-diffeomorphism $g^{-1}_{|V(\theta')}\circ f_{|U(\theta)}$. This concludes our proof.

\end{proof}

Incidentally, we observe that the proof of Theorem \ref{th1} could be simpler if we   asked generic functions to satisfy a further condition beside $(C1-2)$, that is
\begin{enumerate}
\item[(C3)] the set $\{\theta\in S^1: \frac{\ d f_1}{d\theta}(\theta)\ne 0 \ \wedge\  \frac{\ d f_2}{d\theta}(\theta)\ne 0\}$ is dense in $S^1$.
\end{enumerate}
Roughly speaking, (C3) says that, for almost every point, the tangent line to the curve is neither horizontal nor vertical.  This is still a generic property. Clearly, in this way, the proof that $f(S^1)\setminus f(\bar N_f)\subseteq g(S^1)$ would be trivial.  However, the price to pay would be some other complications in the next Theorem \ref{th2}.

%As for the set of functions satisfying property (C3), let us consider the submanifold $W_1$ in $J^1(S^1,\R^2)$ of jets $[\theta, f(\theta),d_{\theta}f]$ such that $\langle d_{\theta}f(\tau),(1,0)\rangle=0$. By the Jet Transversality Theorem the set of maps $f\in C^1(S^1,\R^2)$ such that $j^1f$ is transverse to $W_1$ is a dense and residual subset of $C^1(S^1,\R^2)$. Moreover, since $W_1$ has codimension 1 in $J^1(S^1,\R^2)$,  for every $f\in C^1(S^1,\R^2)$ such that $j^1f$ is transverse to $W_1$, $(j^1f)^{-1}(W)$ has codimension 1 in $S^1$. This implies that if $j^1f$ is transverse to $W_1$, then the set $\{\theta\in S^1: \frac{\ d f_1}{d\theta}(\theta)\ne 0\}$ is dense in $S^1$. Analogously, taking $W_2$ as the submanifold of jets $[\theta, f(\theta),d_{\theta}f]$ such that $\langle d_{\theta}f(\tau),(1,0)\rangle=0$ the set of maps $f\in C^1(S^1,\R^2)$ such that $j^1f$ is transverse to $W_2$ is a dense and residual subset of $C^1(S^1,\R^2)$, and if $j^1f$ is transverse to $W_1$, then the set $\{\theta\in S^1: \frac{\ d f_2}{d\theta}(\theta)\ne 0\}$ is dense in $S^1$. Therefore, the set of maps transverse to $W_1$ and $W_2$ is residual and dense in $C^1(S^1,\R^2)$. Moreover this set of maps has the property that $\{\theta\in S^1: \frac{\ d f_1}{d\theta}(\theta)\ne 0 \ \wedge\  \frac{\ d f_2}{d\theta}(\theta)\ne 0\}$ is dense in $S^1$.

From previous Theorem~\ref{th1} the next corollary follows:

\begin{cor}\label{th3}
Let $f,g:S^1\to \R^2$ be two continuous functions. If there exist two generic functions $f':S^1\to \mathbb{R}^2$, $g':S^1\to \mathbb{R}^2$ such that  $H_0^{ u, v}(s\circ f')= H_0^{ u, v}(s\circ g')$ for every $( u, v)\in \Delta^+$ and every $s\in \Sigma_2$, then  there exists a $C^1$-diffeomorphism $h:S^1\to S^1$ such that $\|g\circ h-f\|_\infty\le \|g-g'\|_\infty+ \|f'-f\|_\infty$.
\end{cor}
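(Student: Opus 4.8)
The plan is to deduce Corollary~\ref{th3} directly from Theorem~\ref{th1} by a short triangle-inequality argument. The hypothesis hands us two \emph{generic} functions $f',g'$ whose persistent homology groups (together with those of their compositions with every $s\in\Sigma_2$) coincide at every $(u,v)\in\Delta^+$; by Theorem~\ref{th1} there is a $C^1$-diffeomorphism $h:S^1\to S^1$ with $g'\circ h=f'$. This $h$ is the diffeomorphism we will use in the statement.

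With $h$ fixed, the estimate is now purely metric. I would write, for every $\theta\in S^1$,
\begin{equation*}
\|g(h(\theta))-f(\theta)\|\le \|g(h(\theta))-g'(h(\theta))\|+\|g'(h(\theta))-f(\theta)\|,
\end{equation*}
and then observe that $g'(h(\theta))=f'(\theta)$ by the identity produced by Theorem~\ref{th1}, so the second summand is $\|f'(\theta)-f(\theta)\|$. The first summand is bounded by $\sup_{\eta\in S^1}\|g(\eta)-g'(\eta)\|=\|g-g'\|_\infty$ because $h$ maps $S^1$ onto $S^1$ (here one uses surjectivity of the diffeomorphism $h$, which is where the reparameterization-invariance of the sup-norm enters). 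Taking the supremum over $\theta\in S^1$ on the left then yields
\begin{equation*}
\|g\circ h-f\|_\infty\le \|g-g'\|_\infty+\|f'-f\|_\infty,
\end{equation*}
which is exactly the claimed inequality.

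There is essentially no obstacle here: the only point requiring a word of care is that the sup-norm is invariant under precomposition with the diffeomorphism $h$, i.e.\ $\|g\circ h-g'\circ h\|_\infty=\|g-g'\|_\infty$, which is immediate since $h$ is a bijection of $S^1$. One should also note that $f,g$ are merely continuous (not generic), so the conclusion only asserts closeness of $g\circ h$ to $f$ in the max-norm and makes no claim that $g\circ h=f$; this is consistent with the statement. Thus the corollary is a formal consequence of Theorem~\ref{th1} together with the triangle inequality and the reparameterization-invariance of $\|\cdot\|_\infty$.
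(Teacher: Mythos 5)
Your proposal is correct and is essentially the paper's own argument: apply Theorem~\ref{th1} to the generic functions $f',g'$ to obtain $h$ with $g'\circ h=f'$, then use the triangle inequality together with the invariance of $\|\cdot\|_\infty$ under precomposition with the bijection $h$. The paper writes the chain as $\|g\circ h-f\|_\infty\le \|g\circ h-g'\circ h\|_\infty+\|g'\circ h-f'\|_\infty+\|f'-f\|_\infty$ with the middle term vanishing, which is the same computation you performed pointwise.
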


\begin{proof}
Theorem~\ref{th1} ensures that a $C^1$-diffeomorphism $h:S^1\to S^1$ exists such that $g'\circ h=f'$. As a consequence, $\|g\circ h-f\|_\infty\le \|g\circ h-g'\circ h\|_\infty+\|g'\circ h-f'\|_\infty + \|f'-f\|_\infty=\|g-g'\|_\infty+ \|f'-f\|_\infty$.
\end{proof}

Theorem \ref{th1} shows that persistent homology is sufficient to classify curves of $\R^2$ up to $C^1$-diffeomorphisms that preserve the considered functions, but this result seems not to be completely satisfactory. Indeed, in order to be applied, it requires complete coincidence of persistent homology groups, which may not occur in concrete applications.

In some sense the next result Theorem \ref{th2} improves Theorem \ref{th1}, since it translates our approach into a setting where it is requested only some kind of closeness between the persistent homology groups of the considered functions $f$, $g$, expressed by a suitable distance. In order to state Theorem \ref{th2}, we need to consider a restricted space of functions.

\begin{defn}\label{Fk}
For every positive real number $k$, we define $F_k$ to be the subset of $C^1(S^1,\R^2)$ such that

\begin{enumerate}
\item   $f$ is  generic;
\item $f(S^1)$ is contained in the disk of $\R^2$ centered at $(0,0)$ with radius $k$;
  \item $f$ is a curve of length $\ell_f$ with $\ell_f\le k$;
  \item The curvature of the curve $f$ is everywhere not greater than $k$;
  \item Every $C^1$ function $f':S^1\to\R^2$ such that $f'$ has a distance less than $\frac{1}{k}$ from $f$, with respect to the $C^1$-norm, is generic.
\end{enumerate}
\end{defn}

Let us recall that the set of generic functions is open in $C^1(S^1,\R^2)$ (see Proposition~\ref{generic}). Hence, for $k$ sufficiently large, the set $F_k$ is non-empty. Moreover, let us remark that,   for any generic $f$, there is a sufficiently large value $k(f)\in \R$ such that $f\in F_{k(f)}$.

\begin{lem}\label{Fkclosed}
The closure of $F_k$ in the $C^1$-topology is contained in the space of generic functions.
\end{lem}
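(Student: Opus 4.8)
The plan is to take a sequence $(f_n)$ in $F_k$ converging in the $C^1$-topology to some $f\in C^1(S^1,\R^2)$ and show that $f$ is generic, i.e.\ that $f$ satisfies conditions (C1) and (C2) of Definition~\ref{d1}. The essential observation is that condition (5) in the definition of $F_k$ is precisely designed to supply this: it says that the open $\frac1k$-ball around any $f_n$ (in the $C^1$-norm) consists entirely of generic functions. So the strategy is to locate $f$ inside such a ball.

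First I would use the convergence $f_n\to f$ in $C^1$-norm to pick, for $\eps>0$ small (say $\eps<\frac1k$), an index $n$ with $\|f_n-f\|_{C^1}<\eps<\frac1k$. Then $f$ has $C^1$-distance less than $\frac1k$ from $f_n\in F_k$, so by property (5) applied to $f_n$, the function $f$ is generic. That is the whole argument; there is essentially no obstacle, since condition (5) was built into $F_k$ for exactly this purpose. One should only be mildly careful that property (5) is stated with a strict inequality ($<\frac1k$), which is why I take $\eps$ strictly smaller than $\frac1k$; and that the ambient topology is the $C^1$-topology, so that ``$f$ in the closure of $F_k$'' does give a sequence (or at least a net, but on the metrizable space $C^1(S^1,\R^2)$ a sequence suffices) converging to $f$ in that topology.

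If one prefers to avoid sequences altogether, the same reasoning can be phrased directly: if $f$ lies in the $C^1$-closure of $F_k$, then every $C^1$-neighborhood of $f$ meets $F_k$; in particular the ball $B(f,\frac1k)$ contains some $g\in F_k$, whence $f\in B(g,\frac1k)$, and property (5) of $F_k$ applied to $g$ yields that $f$ is generic. Hence the $C^1$-closure of $F_k$ is contained in the set of generic functions, as claimed.

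I expect the only point worth a sentence of comment is that the argument does \emph{not} show $F_k$ itself is closed (conditions (2)--(4) pass to $C^1$-limits by continuity of length, radius bound, and curvature, but condition (5) need not), nor is it needed: the lemma only asserts the weaker inclusion of the closure into the generic functions, which is all that the later compactness-type arguments for Theorem~\ref{th2} will require.
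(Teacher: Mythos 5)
Your argument is correct and is essentially the paper's own proof: both locate the limit function inside a $C^1$-ball of radius less than $\frac1k$ around some element of $F_k$ and then invoke property (5) of Definition~\ref{Fk} (the paper uses radius $\frac{1}{2k}$, you use any $\eps<\frac1k$, which is immaterial). Your remarks on metrizability and on why the lemma does not claim that $F_k$ itself is closed are accurate but not needed.
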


\begin{proof}
Let $(f_i)$ be a   sequence in $F_k$, and assume $\lim_{i\to\infty}f_i$ exists and is equal to $\bar f$. Thus, the ball centered at $\bar f$ with radius $1/2k$ contains some function $f_i$. Since $f_i\in F_k$, by  property (v) in Definition \ref{Fk}, we see that $\bar f$ is generic.
\end{proof}

In order to measure the distance between the persistent Betti numbers functions $\mathrm{rk}\, H_0^{\cdot,\cdot}(f),\mathrm{rk}\, H_0^{\cdot,\cdot}(g):\Delta^+\to\mathbb{N}$, we use the matching distance $D_{match}$
defined and studied in \cite{BiCeFrGiLa08}. The main property of this distance (and the unique we use in this paper) is that it is stable with respect to perturbation of the functions. Indeed, the Multidimensional Stability Theorem in degree $0$ (see \cite[Thm. 4]{BiCeFrGiLa08}) states that
if $\|f-g\|_\infty\le \epsilon$ then $D_{match}\left(\mathrm{rk}\ H_0^{\cdot,\cdot}(f),\mathrm{rk}\ H_0^{\cdot,\cdot}(g)\right)\le \epsilon$. For the definition and the main results concerning this distance between the ranks of the persistent homology groups in degree $0$, i.e. size functions, we refer the interested reader to \cite{BiCeFrGiLa08}.

We can now extend Theorem~\ref{th1} to the following result.

\begin{thm}\label{th2}
Let $k>0$. For every $\epsilon>0$, a $\delta>0$ exists such that if $f,g\in F_k$ and the matching distance between the functions $\mathrm{rk}\, H_0^{ \cdot, \cdot}(s\circ f)$ and $\mathrm{rk}\, H_0^{ \cdot, \cdot}(s\circ g)$ is not greater than $\delta$ for every $s\in \Sigma_2$, then there exists a $C^1$-diffeomorphism $h:S^1\to S^1$ such that $\|f-g\circ h\|_\infty\le \epsilon$.
\end{thm}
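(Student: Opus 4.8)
The plan is to argue by contradiction, reducing the statement to Theorem~\ref{th1} through a compactness argument. Suppose the assertion fails for some $k>0$ and some $\epsilon>0$. Then for each $i\in\mathbb{N}$ there are $f_i,g_i\in F_k$ with $D_{match}\bigl(\mathrm{rk}\,H_0^{\cdot,\cdot}(s\circ f_i),\mathrm{rk}\,H_0^{\cdot,\cdot}(s\circ g_i)\bigr)\le 1/i$ for every $s\in\Sigma_2$, and yet $\|f_i-g_i\circ h\|_\infty>\epsilon$ for every $C^1$-diffeomorphism $h:S^1\to S^1$. Since both the persistent homology groups of $s\circ f$ and the truth of the last inequality are unaffected when $f_i$ and $g_i$ are pre-composed with $C^1$-diffeomorphisms of $S^1$ (the inequality only forces one to conjugate $h$), I would from the outset replace $f_i$ and $g_i$ by their re-parameterizations proportional to arc length.

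The next step is to extract limits by the Arzel\`a--Ascoli theorem. Conditions (ii) and (iii) of Definition~\ref{Fk}, being geometric, are inherited by the re-parameterized $f_i$, so $\|f_i\|_\infty\le k$ and $\|f_i'\|_\infty=\ell_{f_i}/2\pi\le k/2\pi$; condition (iv) bounds the Lipschitz constant of $f_i'$ by a quantity depending only on $k$, so $\{f_i'\}$ is equicontinuous, and the same holds for $\{g_i\}$. Passing to a subsequence, $f_i\to\bar f$ and $g_i\to\bar g$ in the $C^1$-topology, and these limits are parameterized with constant speed. The decisive point is that $\bar f$ and $\bar g$ are generic. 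That they are immersions follows from condition (iv) via Fenchel's theorem, which forces $\ell_{f_i}\ge 2\pi/k$, so the limiting speeds are at least $1/k>0$. That they also satisfy condition (C2) is where one must invoke Lemma~\ref{Fkclosed}; the subtlety is that this lemma is stated for the original parameterizations, so one has to argue that the robust genericity carried by condition (v) persists --- with constants depending only on $k$ --- through both the re-parameterization to constant speed and the passage to the $C^1$-limit.

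Granting that $\bar f$ and $\bar g$ are generic, the conclusion comes quickly. Each reflection $s\in\Sigma_2$ is a Euclidean isometry, so $\|s\circ f_i-s\circ\bar f\|_\infty=\|f_i-\bar f\|_\infty\to 0$, and the Multidimensional Stability Theorem in degree $0$ (cf.~\cite{BiCeFrGiLa08}) gives $D_{match}\bigl(\mathrm{rk}\,H_0^{\cdot,\cdot}(s\circ f_i),\mathrm{rk}\,H_0^{\cdot,\cdot}(s\circ\bar f)\bigr)\to 0$, and likewise for $g_i$ and $\bar g$. Combining this with the hypothesis and the triangle inequality for $D_{match}$ forces $D_{match}\bigl(\mathrm{rk}\,H_0^{\cdot,\cdot}(s\circ\bar f),\mathrm{rk}\,H_0^{\cdot,\cdot}(s\circ\bar g)\bigr)=0$ for every $s\in\Sigma_2$; since $D_{match}$ is a distance between persistent Betti numbers functions, we obtain $\mathrm{rk}\,H_0^{u,v}(s\circ\bar f)=\mathrm{rk}\,H_0^{u,v}(s\circ\bar g)$ for all $(u,v)\in\Delta^+$ and all $s\in\Sigma_2$, which is exactly the hypothesis of Theorem~\ref{th1}. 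Hence there is a $C^1$-diffeomorphism $\bar h:S^1\to S^1$ with $\bar g\circ\bar h=\bar f$, and then
\[
\|f_i-g_i\circ\bar h\|_\infty\le\|f_i-\bar f\|_\infty+\|\bar g\circ\bar h-g_i\circ\bar h\|_\infty=\|f_i-\bar f\|_\infty+\|\bar g-g_i\|_\infty\longrightarrow 0,
\]
so for $i$ large $\|f_i-g_i\circ\bar h\|_\infty<\epsilon$, contradicting the choice of the sequence (take $h=\bar h$); since the reduction at the start only altered each $f_i,g_i$ by a re-parameterization, this contradiction is equivalent to one for the original functions.

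I expect the main obstacle to be precisely the compactness step, and within it the claim that the limit functions $\bar f$ and $\bar g$ are generic. Conditions (ii)--(iv) of Definition~\ref{Fk} are exactly what make Arzel\`a--Ascoli available once one re-parameterizes by arc length, but re-parameterizing sits uneasily with condition (v) and Lemma~\ref{Fkclosed}, which speak about the given parameterization; establishing that robust genericity survives both operations, with uniform constants, is the delicate part and is the reason Definition~\ref{Fk} is so heavily equipped. Everything else --- stability of $D_{match}$, the fact that $D_{match}=0$ forces equality of the rank invariants, and the re-parameterization invariance of persistent homology --- is standard and enters only in the routine way sketched above.
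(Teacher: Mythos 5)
Your proposal is correct and follows essentially the same route as the paper: argue by contradiction, re-parameterize proportionally to arc length, use the bounds in Definition~\ref{Fk} together with Ascoli--Arzel\`a to extract $C^1$-limits $\bar f,\bar g$, obtain their genericity from Lemma~\ref{Fkclosed}, invoke the Multidimensional Stability Theorem to force equality of the persistent Betti numbers of $s\circ\bar f$ and $s\circ\bar g$ for all $s\in\Sigma_2$, and conclude via Theorem~\ref{th1}. The ``delicate point'' you flag --- whether property (v), and hence the applicability of Lemma~\ref{Fkclosed}, survives the passage to the constant-speed parameterization --- is not treated in the paper either, which simply declares the arc-length re-parameterization ``not restrictive'' and applies Lemma~\ref{Fkclosed} to the re-parameterized sequence, so your remark identifies a gloss in the original argument rather than a divergence of method.
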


\begin{proof}
Let us assume that our statement is false. Then a value $\bar \epsilon>0$ exists such that, for every $\delta>0$, two functions $f_\delta,g_\delta\in F_k$ exist, for which   $D_{match}\left(\mathrm{rk}\, H_0^{ \cdot, \cdot}(s\circ f), \mathrm{rk}\, H_0^{ \cdot, \cdot}(s\circ g)\right)\le  \delta$, for every $s\in \Sigma_2$, but $\|f_\delta-g_\delta\circ h\|_\infty> \bar\epsilon$, for every $C^1$-diffeomorphism $h:S^1\to S^1$. Since each persistent homology group is invariant by composition of the considered function with a homeomorphism, it is not restrictive to assume that, for every  $\delta>0$, the parameter $\theta$ is proportional to the arc-length parameter of the curves $f_\delta$ and $g_\delta$ (up to a shift).

It is easy to check that, because of our choice of the parameterization of $S^1$ and of the bounds assumed on the length and the curvature of the curves $f_\delta$ and $g_\delta$ (see properties (iii) and (iv) in the definition of $F_k$),  the first and second derivative of $f_\delta$ and $g_\delta$ are bounded by a constant independent of $\delta$. 

%Since the length of $f_\delta$ is less than $k$, it is not restrictive to assume that $f_\delta(S^1)$ is contained in the disk centered at $(0,0)$ with radius $k$.   Indeed, we can compose $f_\delta$ and $g_\delta$ with the same translation $\tau$ of $\R^2$, and   $D_{match}\left(\mathrm{rk}\ H_0^{ \cdot, \cdot}(s\circ \tau\circ f_\delta), \mathrm{rk}\ H_0^{ \cdot, \cdot}(s\circ\tau\circ g_\delta)\right)= D_{match}\left(\mathrm{rk}\ H_0^{ \cdot, \cdot}(s\circ f_\delta), \mathrm{rk}\ H_0^{ \cdot, \cdot}(s\circ g_\delta)\right)$, for every $s\in \Sigma_2$. Moreover, since $D_{match}\left(\mathrm{rk}\ H_0^{ \cdot, \cdot}( f_\delta), \mathrm{rk}\ H_0^{ \cdot, \cdot}( g_\delta)\right)<\delta$, we can assume that also $ g_\delta(S^1)$ is contained in the disk centered at $(0,0)$ with radius $k$. 

Let us consider the sequences $\left(f_{\frac{1}{i}}\right)$, $\left(g_{\frac{1}{i}}\right)$ ($i\in \mathbb{N}^+$).  Because of the definition of $F_k$, using the Ascoli-Arzel\`a Theorem (in its generalized version for higher derivatives, cf., e.g., \cite{Jo05}), and possibly extracting two subsequences, we can assume that $\left(f_{\frac{1}{i}}\right)$, $\left(g_{\frac{1}{i}}\right)$ converge to the $C^1$ functions $\bar f,\bar g$, , respectively, in the $C^1$-norm. Because of Lemma \ref{Fkclosed}, we know that $\bar f$ and $\bar g$ are generic. By applying the Multidimensional Stability Theorem in degree $0$ (cf. \cite[Thm. 4]{BiCeFrGiLa08}), we see that the matching distance between the functions $\mathrm{rk}\ H_0^{ \cdot, \cdot}(s\circ \bar f)$ and $\mathrm{rk}\ H_0^{ \cdot, \cdot}(s\circ \bar g)$ vanishes for every $s\in \Sigma_2$. Since $D_{match}$ is a distance, $\mathrm{rk}\, H_0^{ \cdot, \cdot}(s\circ \bar f)\equiv \mathrm{rk}\, H_0^{ \cdot, \cdot}(s\circ \bar g)$, and thus $H_0^{ u, v}(s\circ \bar f)=  H_0^{ u, v}(s\circ \bar g)$, for every $(u,v)\in \Delta^+$ and $s\in \Sigma_2$.

Therefore, we can apply Theorem~\ref{th1} and deduce that there exists a $C^1$-diffeomorphism $h:S^1\to S^1$ such that $\bar g\circ h=\bar f$.
As a consequence, $0=\|\bar g\circ h-\bar f\|_\infty= \lim_{i\to \infty} \|g_{\frac{1}{i}}\circ h-f_{\frac{1}{i}}\|_\infty \ge \bar\epsilon >0$. This is a contradiction, and hence our statement is proven.
\end{proof}

We conclude this paper by observing that
the presented approach  can be straightforwardly adapted to the case of curves in $\R^n$, and to the curves with more than one connected component. We leave the easy details to the reader. However, we note that generic curves in $\R^n$ with $n\ge 3$ have no multiple points.

The generalization of our results to surfaces seems to present some technical difficulties, and deserves a separate treatment.

\appendix
\section*{Appendix}
\setcounter{section}{1}

\begin{lem}\label{equalities}
Let  $R$ be an open rectangle in $\R^2$ with sides parallel to the coordinate axes and let $v=(v_1,v_2)$ be its top-right vertex. Let $a=(a_1,a_2)$, $b=(b_1,b_2)$, $c=(c_1,c_2)$, $d=(d_1,d_2)$  be four points in $R$ with $a_1=c_1<b_1=d_1<v_1$ and $c_2=d_2<a_2=b_2<v_2$ (see Fig. \ref{quad}). 
If $R\cap \hat f(S^1)=\emptyset$ then $$\mathrm{rk}\ H_0^{b, v}(\hat f)-\mathrm{rk}\ H_0^{d, v}(\hat f)-\mathrm{rk}\ H_0^{a, v}(\hat f)+\mathrm{rk}\ H_0^{c, v}(\hat f)=0.$$
If $\hat f$ is top-right transversal to $R$, then
$$\mathrm{rk}\ H_0^{b, v}(\hat f)-\mathrm{rk}\ H_0^{d, v}(\hat f)-\mathrm{rk}\ H_0^{a, v}(\hat f)+\mathrm{rk}\ H_0^{c, v}(\hat  f)=1.$$
\end{lem}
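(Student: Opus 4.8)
The plan is to reduce both identities to a single combinatorial count over the connected components of $\hat f^{-1}(D^v)$, and then to read that count off from the planar geometry of $D^v\setminus R$.

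\emph{Reduction.} For $w\in\{a,b,c,d\}$ one has $\mathrm{rk}\,H_0^{w,v}(\hat f)=\sum_C\chi_w(C)$, where $C$ runs over the connected components of the closed set $X_v:=\hat f^{-1}(D^v)\subseteq S^1$ and $\chi_w(C)=1$ if $C\cap\hat f^{-1}(D^w)\neq\emptyset$ and $\chi_w(C)=0$ otherwise; only the finitely many $C$ with $\chi_b(C)=1$ matter, since $\mathrm{rk}\,H_0^{b,v}(\hat f)<\infty$. Thus the left-hand side of either identity equals $\sum_C\delta(C)$ with $\delta(C):=\chi_b(C)-\chi_d(C)-\chi_a(C)+\chi_c(C)$. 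From $D^c=D^a\cap D^d\subseteq D^a\cup D^d\subseteq D^b$ we get $\chi_c(C)\le\chi_a(C)\le\chi_b(C)$ and $\chi_c(C)\le\chi_d(C)\le\chi_b(C)$, and a short check then gives $\delta(C)\in\{-1,0,1\}$, with $\delta(C)=1$ only if $\chi_b(C)=1$ and $\chi_a(C)=\chi_d(C)=0$, and $\delta(C)=-1$ only if $\chi_b(C)=\chi_a(C)=\chi_d(C)=1$ and $\chi_c(C)=0$.

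\emph{Key claim: if $\hat f(C)\cap R=\emptyset$, then $\delta(C)=0$.} Write the left and bottom edges of $R$ as $\{x_1=L\}$ and $\{x_2=B\}$, so $L<a_1<b_1<v_1$ and $B<c_2<a_2<v_2$. Since $\hat f(C)\subseteq D^v\setminus R$ and $b_1<v_1$, $a_2<v_2$, a direct inspection of the planar regions gives $D^b\cap(D^v\setminus R)\subseteq D^a\cup D^d$, so $\chi_b(C)=1$ forces $\chi_a(C)=1$ or $\chi_d(C)=1$; this excludes $\delta(C)=1$. To exclude $\delta(C)=-1$, suppose $\chi_c(C)=0$ and $\chi_a(C)=\chi_d(C)=1$. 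The inclusions $D^a\cap(D^v\setminus R)\subseteq D^c\cup\{x_1\le L\}$ and $D^d\cap(D^v\setminus R)\subseteq D^c\cup\{x_2\le B\}$ show that $P:=\{t\in C:\hat f_1(t)\le L\}$ and $Q:=\{t\in C:\hat f_2(t)\le B\}$ are non-empty; they are closed, and they cover $C$ (here one uses that $R$ is chosen, as in the proof of Theorem~\ref{th1}, around a non-multiple point of $\hat f(S^1)$, so that $\hat f(S^1)$ avoids the top and right edges of $R$ and no further ``boundary'' piece of $D^v\setminus R$ enters the cover). Connectedness of $C$ gives $P\cap Q\neq\emptyset$, and every point of $P\cap Q$ has image in $\{x_1\le L,\ x_2\le B\}\subseteq D^c$, contradicting $\chi_c(C)=0$. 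Hence $\delta(C)=0$.

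\emph{Conclusion.} If $R\cap\hat f(S^1)=\emptyset$, every component $C$ of $X_v$ satisfies the key claim, so $\sum_C\delta(C)=0$, which is the first identity. If instead $\hat f$ is top-right transversal to $R$, then $\hat f^{-1}(R)$ is a single arc $\gamma$; every component of $X_v$ other than the one, $C_0$, containing $\gamma$ again satisfies the key claim, so $\sum_C\delta(C)=\delta(C_0)\in\{-1,0,1\}$, and it remains to see that $\delta(C_0)=1$. With $R$ chosen as in the proof of Theorem~\ref{th1}, one has $C_0=\bar\gamma$ and $\hat f(C_0)=\hat f(\bar\gamma)$ is a strictly monotone arc that crosses the segment $\overline{db}$ (Figure~\ref{quad}) at a point with first coordinate $b_1$ and second coordinate strictly between $c_2$ and $a_2$: this point lies in $D^b$ but in neither $D^a$ (first coordinate $>a_1$) nor $D^d$ (second coordinate $>c_2$), so $\chi_b(C_0)=1$. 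Moreover, since $a$ lies on the lower-left side of the decreasing arc $\hat f(C_0)$, every point of $\hat f(C_0)$ with first coordinate $\le a_1$ has second coordinate $>a_2$ (or no such point exists), so $\hat f(C_0)\cap D^a=\emptyset$; symmetrically $\hat f(C_0)\cap D^d=\emptyset$, because $d$ also lies on the lower-left side of the arc. Hence $\delta(C_0)=1$ and $\sum_C\delta(C)=1$. The main obstacle is this last step: identifying $C_0$ with $\bar\gamma$ and extracting, purely from the monotonicity of $\hat f$ on $\gamma$ and the placement of $a,b,c,d$ relative to $\hat f(\gamma)$ in Figure~\ref{quad}, that $C_0$ meets $D^b$ but misses $D^a$ and $D^d$; a secondary nuisance, already present in the key claim, is excluding configurations in which $\hat f(S^1)$ runs along the top or right edge of $R$, which is dispatched by the freedom in the choice of $R$ afforded by the proof of Theorem~\ref{th1}.
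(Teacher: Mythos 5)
Your reduction to the per-component quantity $\delta(C)$, and the exclusion of $\delta(C)=\pm 1$ for components whose image misses $R$, is essentially the paper's own argument in different bookkeeping (the paper counts the complementary numbers $n_u$ and splits them over two strips), so for the first identity you are on the paper's track, modulo the edge caveat you yourself flag. The genuine gap is in the second identity, exactly at the step you call the main obstacle: the assertion $C_0=\bar\gamma$ is never proved, and it does not follow from ``top-right transversal'' as defined, nor from the properties of $R$ listed in the proof of Theorem~\ref{th1}. Transversality only says that $\hat f^{-1}(R)$ is one monotone open arc $\gamma$; its image has endpoints on $\partial R$, but unless those endpoints lie on the top or right edge of $\bar R$ (the only parts of $\partial R$ contained in $\partial D^v$), the curve continues \emph{inside} $D^v$ after leaving $R$, so $C_0\supsetneq\bar\gamma$, and $\hat f(C_0)$ may well meet $D^a$ or $D^d$. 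Concretely, take $v=(0,0)$, $R=(-10,0)\times(-10,0)$, let $\hat f$ cross $R$ in a short arc of slope $-1$ through $(-9,-9)$, entering the left edge at $(-10,-8)$ and leaving the bottom edge at $(-8,-10)$, and let the rest of the (simple, $C^1$, hence generic) curve loop around $R$ through the regions $x_1\le -10$ and $x_2\le -10$ while staying in the third quadrant; choose $a=(-9.5,-8.8)$, $b=(-9,-8.8)$, $c=(-9.5,-9.5)$, $d=(-9,-9.5)$. All the coordinate inequalities hold, $a,c,d$ are separated from $b$ by $\hat f(\gamma)$, and $\overline{db}$ meets $\hat f(\gamma)$ at $(-9,-9)$, yet $\hat f^{-1}(D^v)=S^1$ is a single component meeting all four sets $D^a,D^b,D^c,D^d$, so the alternating sum is $1-1-1+1=0$, not $1$. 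So $\delta(C_0)=1$ genuinely requires the extra geometric input that $\hat f(\gamma)$ cuts off the top-right corner of $R$, entering through the top edge and exiting through the right edge, so that the curve leaves $D^v$ at the endpoints of $\gamma$ and $C_0=\bar\gamma$; this is what the paper's Figure~\ref{quad} tacitly encodes and what your appeal to ``the freedom in the choice of $R$'' would have to deliver explicitly.

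Worse, that requirement is in direct conflict with the other use you make of the same freedom: in the key claim you assume $\hat f(S^1)$ avoids the top and right edges of $\bar R$ so that $P\cup Q$ covers $C$, whereas the configuration needed for $C_0=\bar\gamma$ forces the endpoints of $\hat f(\bar\gamma)$ to lie precisely on those edges. The cover argument can be salvaged (the offending edge points belong to $C_0$, not to the components avoiding $R$, and one can also rule out a crossing of the two boundary pieces at the corner $v$ by a local $C^1$-immersion argument), but as written the two halves of your proof demand incompatible choices of $R$, and the central identification $C_0=\bar\gamma$ — the point on which the value $1$ rests — is asserted rather than established.
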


\begin{proof}
First of all we observe that $\mathrm{rk}\ H_0^{u, v}(\hat  f)$ is the number of connected components of $D^v$ that contain at least one point of $D^u$, by definition. So, letting  $n_u$ be  the number of connected components $C$ of $\hat f^{-1}(D^v)$ such that $\hat f(C)$ does not meet $D^u$, and $n$  the number of connected components of $\hat f^{-1}(D^v)$, $\mathrm{rk}\ H_0^{u, v}( \hat f)=n-n_u$.
As a consequence
$$\mathrm{rk}\ H_0^{b, v}(\hat  f)-\mathrm{rk}\ H_0^{d, v}(\hat f)-\mathrm{rk}\ H_0^{a, v}(\hat f)+\mathrm{rk}\ H_0^{c, v}(\hat f)=-n_b+n_d+n_a-n_c.$$

Let us consider the strips  $S^{top}_u=\{(x_1,x_2)\in D^v: x_2>u_2 \wedge (x_1,x_2)\not\in R\}$ and $S^{right}_u= \{(x_1,x_2)\in D^v : x_1>u_1 \wedge (x_1,x_2)\not\in R\}$, for $u=a,b,c,d$ (see Fig. \ref{figDprime}). We denote by $n^{top}_u$  the number of connected components $C$ of $\hat f^{-1}(D^v)$ such that $\hat f(C)$ is entirely contained in the strip $S^{top}_u$. Analogously, we denote by $n^{right}_u$  the number of connected components $C$ of $\hat f^{-1}(D^v)$ such that $\hat f(C)$ is entirely contained in the strip $S^{right}_u$.

If $R\cap \hat f(S^1)=\emptyset$, then $n_u=n^{top}_u+n^{right}_u$,  for $u=a,b,c,d$ (see Fig. \ref{figDprime}, top row). The equality $\mathrm{rk}\, H_0^{b, v}( \hat f)-\mathrm{rk}\, H_0^{d, v}(\hat  f)-\mathrm{rk}\, H_0^{a, v}(\hat f)+\mathrm{rk}\, H_0^{c, v}(\hat  f)=0$ follows by observing that $n^{top}_b=n^{top}_a$, $n^{top}_d=n^{top}_c$, $n^{right}_b=n^{right}_d$, $n^{right}_a=n^{right}_c$.

If $\hat f$ is top-right transversal to $R$, then $n_b=n^{top}_b+n^{right}_b$, $n_d=n^{top}_d+n^{right}_d+1$, $n_a=n^{top}_a+n^{right}_a+1$, $n_c=n^{top}_c+n^{right}_c+1$ (see Fig. \ref{figDprime}, bottom row). Once again, the equality $\mathrm{rk}\ H_0^{b, v}(\hat  f)-\mathrm{rk}\ H_0^{d, v}(\hat  f)-\mathrm{rk}\ H_0^{a, v}(\hat f)+\mathrm{rk}\ H_0^{c, v}(\hat  f)=1$ follows by observing that $n^{top}_b=n^{top}_a$, $n^{top}_d=n^{top}_c$, $n^{right}_b=n^{right}_d$, $n^{right}_a=n^{right}_c$.

\begin{figure}[h]
\begin{center}
\begin{tabular}{c}
\psfrag{R}{$R$} \psfrag{a}{$a$} \psfrag{b}{$b$}\psfrag{c}{$c$}\psfrag{d}{$d$}\psfrag{v}{$v$}\psfrag{D1}{$D^v_d\setminus R$}\psfrag{D2}{$D^v_a\setminus R$} \psfrag{D3}{$D^v_b\setminus R$}\psfrag{D4}{$D^v_c\setminus R$}
\psfrag{Sat}{$S^{top}_a$}\psfrag{Sbt}{$S^{top}_b$}\psfrag{Sct}{$S^{top}_c$}\psfrag{Sdt}{$S^{top}_d$}
\psfrag{Sar}{$S^{right}_a$}\psfrag{Sbr}{$S^{right}_b$}\psfrag{Scr}{$S^{right}_c$}\psfrag{Sdr}{$S^{right}_d$}
\includegraphics[width=14cm]{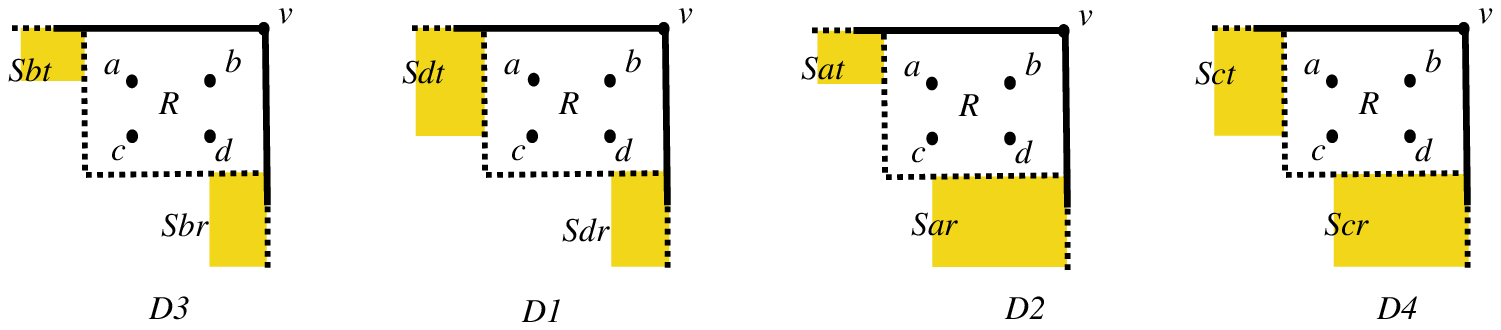}\\
\psfrag{R}{$R$} \psfrag{a}{$a$} \psfrag{b}{$b$}\psfrag{c}{$c$}\psfrag{d}{$d$}\psfrag{v}{$v$}\psfrag{D1}{$\hat f(\gamma)\subseteq D^v_d$}\psfrag{D2}{$\hat f(\gamma)\subseteq D^v_a$} \psfrag{D3}{$\hat f(\gamma)\not \subseteq D^v_b$}\psfrag{D4}{$\hat f(\gamma)\subseteq D^v_c$}\psfrag{G}{$ $}
\includegraphics[width=14cm]{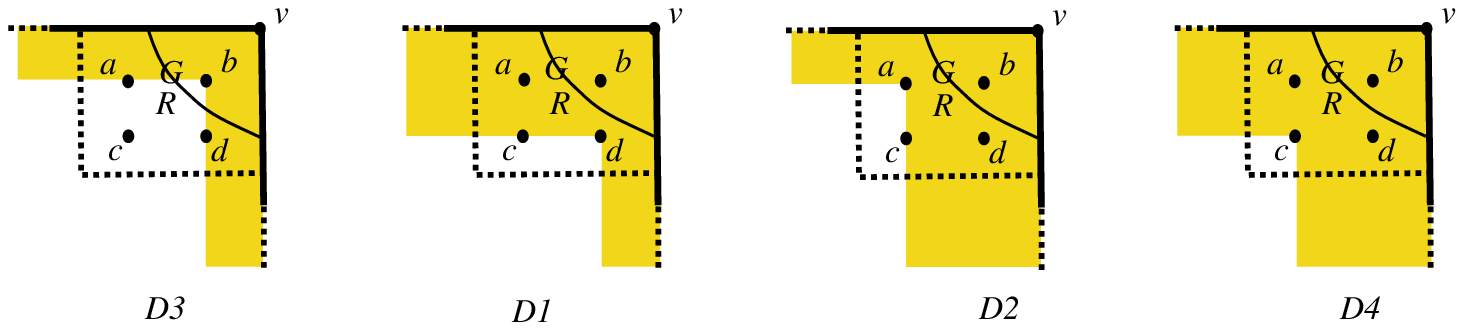}
\end{tabular}
\caption{The  colored sets in the top row correspond to the sets $D^v_a\setminus R$, $D^v_b\setminus R$, $D^v_c\setminus R$, and $D^v_d\setminus R$. Observe that each one of these sets contains two disjoint strips, and each strip is contained in exactly two of those sets.
The  colored sets in the bottom row correspond to the sets $D^v_a$, $D^v_b$, $D^v_c$, and $D^v_d$. Top row:  if $R\cap \hat f(S^1)=\emptyset$, then $-n_b+n_d+n_a-n_c=0$. Bottom row: if $R\cap \hat f(S^1)=\hat f(\gamma)$, then $-n_b+n_d+n_a-n_c=1$.}\label{figDprime}
\end{center}
\end{figure}

\end{proof}

\section*{References}

\end{document}